\newtheorem{theorem}{Theorem}
\newtheorem{corollary}[theorem]{Corollary}
\newtheorem{lemma}[theorem]{Lemma}
\newtheorem{proposition}[theorem]{Proposition}
\newtheorem{definition}[theorem]{Definition}
\newtheorem{remark}[theorem]{\it Remark}
\def\NN{\mathbb N}
\def\N{\mathbb N}
\def\R{\mathbb R}
\def\C{\mathbb C}
\def\RR{\mathbb R}
\begin{document}

\title[Hermite expansions of $C_0$-groups and cosine functions]
{Hermite expansions of $C_0$-groups and cosine functions}

\author{Luciano Abadias}
\address{Departamento de Matem\'aticas, Instituto Universitario de Matem\'aticas y Aplicaciones, Universidad de Zaragoza, 50009 Zaragoza, Spain.}

\email{labadias@unizar.es}

\author{Pedro J. Miana}
\address{Departamento de Matem\'aticas, Instituto Universitario de Matem\'aticas y Aplicaciones, Universidad de Zaragoza, 50009 Zaragoza, Spain.}
\email{pjmiana@unizar.es}

\thanks{Authors have been partially supported by Project MTM2010-16679, DGI-FEDER, of the MCYTS and Project E-64, D.G. Arag\'on,Spain.}

\subjclass[2010]{Primary 47D03, 47D09, 41A10; Secondary 33C45, 41A25.}


\keywords{Hermite expansions; $C_0$-groups;  cosine functions; rate of convergence.}

\begin{abstract}
In this paper we introduce vector-valued Hermite expansions to approximate  one-parameter operator families  such as $C_0$-groups and cosine functions. In both cases we  estimate  the rate of convergence of these Hermite expansions to the related family and compare with other known approximations. Finally we illustrate our results with particular examples of $C_0$-groups and cosine functions and their Hermite expansions.
\end{abstract}

\date{}

\maketitle

\section{Introduction}

Representations of functions through expansions of orthogonal polynomials such as Legendre, Hermite or Laguerre are well known in the classical analysis. They allow to approximate functions by series of orthogonal polynomials on different types of convergence: a pointwise way, uniformly in  certain sets, or in Lebesgue norm. Two classical monographs where we can find this kind of results are \cite[Chapter 4]{Lebedev} and \cite[Chapter IX]{Szego}. In this paper we are concentrated on Hermite expansions.

For all $n\in\N\cup \{ 0 \},$ classical Hermite polynomials are defined by Rodrigues' formula $$H_n(t):=(-1)^n e^{t^2}\frac{d^n}{dt^n}(e^{-t^2})(t),\qquad t\in\R.$$  We mention an interesting theorem which may be found in \mbox{\cite[Theorem 2, Sec. 4.15]{Lebedev}} and whose  original statement was given in \cite{Uspensky}. Although this result holds for functions which are more general that stated in the next theorem (see, for example \cite[p. 603]{Lebedev} and \cite[Theorem 9.1.6]{Szego}) the following version is sufficient for our purposes.

\begin{theorem}\label{ExpEsc} Let $f:\R\to\C$ be a differentiable function such that $\displaystyle{\int_{-\infty}^{\infty}e^{-t^2}|f(t)|^2\,dt<+\infty}$. Then the series $\displaystyle\sum_{n=0}^{\infty}c_n(f)\,H_n(t),$ where $$c_n(f):=\frac{1}{2^n n!\sqrt{\pi}}\int_{-\infty}^{\infty}e^{-t^2}f(t)H_n(t)\,dt,$$ converges pointwise to $f(t)$ for $t\in \RR$.
\end{theorem}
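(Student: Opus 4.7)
The plan is to reduce pointwise convergence of the Hermite series to the analysis of an integral-operator kernel built from orthogonality, then exploit both the $L^2$-weighted hypothesis on $f$ (to control tails) and differentiability at the evaluation point (to control the singular contribution near that point).

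First I would record the fundamental orthogonality relation
\[
\int_{-\infty}^{\infty} e^{-s^2} H_n(s) H_m(s)\,ds = 2^n n!\,\sqrt{\pi}\,\delta_{nm},
\]
which is immediate from Rodrigues' formula and integration by parts. With this, the partial sum $S_N(f)(t):=\sum_{n=0}^N c_n(f) H_n(t)$ can be written as
\[
S_N(f)(t)=\int_{-\infty}^{\infty} K_N(t,s) f(s) e^{-s^2}\,ds,\qquad K_N(t,s):=\sum_{n=0}^N\frac{H_n(t)H_n(s)}{2^n n!\sqrt{\pi}}.
\]
The weighted $L^2$-hypothesis on $f$ makes this integral absolutely convergent by Cauchy--Schwarz against the (finite) $L^2$-norm of $K_N(t,\cdot)e^{-s^2/2}$.

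Next I would invoke the Christoffel--Darboux identity for Hermite polynomials,
\[
K_N(t,s)=\frac{1}{2^{N+1}N!\sqrt{\pi}}\cdot\frac{H_{N+1}(t)H_N(s)-H_N(t)H_{N+1}(s)}{t-s},
\]
so that $S_N(f)(t)-f(t)$ becomes an integral of $[f(s)-f(t)]/(t-s)$ against a product of two shifted Hermite polynomials times $e^{-s^2}$, plus a boundary/remainder piece controlled by $f(t)\int K_N\,e^{-s^2}ds$. The idea is to split this into a local piece $|s-t|<\delta$ and a tail $|s-t|\ge\delta$. On the local piece, differentiability of $f$ at $t$ turns $[f(s)-f(t)]/(s-t)$ into a bounded function, and one applies a Riemann--Lebesgue type statement for the oscillating Hermite functions $h_N(s):=H_N(s)e^{-s^2/2}/\sqrt{2^N N!\sqrt{\pi}}$, exploiting that $\{h_N\}$ is an orthonormal basis of $L^2(\RR)$. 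On the tail, one uses the Plancherel--Rotach asymptotics for $H_N(s)e^{-s^2/2}$ together with the hypothesis $\int e^{-s^2}|f(s)|^2 ds<\infty$ and Cauchy--Schwarz to bound the contribution by a quantity tending to $0$ as $N\to\infty$.

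The principal obstacle is the local piece: uniform-in-$N$ pointwise control of the Christoffel--Darboux kernel near the diagonal is delicate because the factor $1/(t-s)$ is singular. This is precisely where the differentiability assumption is essential, playing the role analogous to a Dini condition for Fourier series; combined with the uniform bounds on $h_N$ and $h_{N+1}$ on compact sets furnished by the Plancherel--Rotach asymptotics, it yields vanishing of the local contribution. Once the local and tail estimates are in hand, letting $N\to\infty$ gives $S_N(f)(t)\to f(t)$ for every $t\in\RR$.
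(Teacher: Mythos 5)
Your overall skeleton is the right one and is essentially the one the paper relies on (the paper does not reprove this classical statement — it cites Lebedev/Uspensky — but it adapts exactly this argument to the vector-valued setting in Theorem \ref{expvect}): orthogonality gives the kernel representation of $S_N(f)(t)$, Christoffel--Darboux together with $\int e^{-s^2}K_N(t,s)\,ds=1$ reduces everything to Hermite coefficients of the difference quotient $\phi(s)=\frac{f(s)-f(t)}{s-t}$, and differentiability of $f$ at $t$ is what makes $\phi$ continuous across $s=t$. However, there is a genuine quantitative gap at the final step. After Christoffel--Darboux, a typical term is
\[
\frac{H_N(t)}{2^{N+1}N!\sqrt{\pi}}\int_{-\infty}^{\infty}e^{-s^2}H_{N+1}(s)\phi(s)\,ds
=\frac{H_N(t)\,\bigl(2^{N+1}(N+1)!\sqrt{\pi}\bigr)^{1/2}}{2^{N+1}N!\sqrt{\pi}}\,\bigl\langle e^{-(\cdot)^2/2}\phi,\mathcal{H}_{N+1}\bigr\rangle,
\]
and for fixed $t$ the explicit prefactor grows like $N^{1/4}$ (this is exactly the content of the Plancherel--Rotach/Lebedev asymptotics $|H_N(t)|\asymp C_t\sqrt{2^N N!}\,N^{-1/4}$ that you quote). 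Consequently a Riemann--Lebesgue/Bessel statement, which only yields $\langle e^{-(\cdot)^2/2}\phi,\mathcal{H}_{N+1}\rangle\to 0$ with no rate, is not enough: you need the coefficients to be $o(N^{-1/4})$. The same loss defeats your tail estimate: Cauchy--Schwarz against $\lVert\mathcal{H}_{N+1}\rVert_2=1$ gives a bound that is merely $O(1)$, hence $O(N^{1/4})$ after multiplying by the prefactor, which diverges.

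The missing ingredient is the technical lemma the paper isolates as Lemma \ref{lematec} (Lebedev, p.~68): if $\phi$ is continuous and $\int_{-\infty}^{\infty}(1+s^2)e^{-s^2}|\phi(s)|^2\,ds<\infty$, then $N^{1/4}\langle e^{-(\cdot)^2/2}\phi,\mathcal{H}_N\rangle\to 0$. This refined decay is precisely what cancels the $N^{1/4}$ growth of the prefactor, and the extra weight $(1+s^2)$ is available exactly because the difference quotient decays like $|s|^{-1}$ relative to $f$: for large $|s|$ one has $(1+s^2)|\phi(s)|^2=O\bigl(|f(s)|^2+|f(t)|^2\bigr)$, so the hypothesis $\int e^{-s^2}|f|^2\,ds<\infty$ suffices. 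With this lemma in hand no local/tail splitting is needed; one applies it once to each of the two Christoffel--Darboux terms, using that $\phi(t,\cdot)$ is continuous on all of $\R$ (differentiability of $f$ at $t$, continuity elsewhere). Without this lemma, or some equivalent $o(N^{-1/4})$ coefficient estimate, your argument does not close.
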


For $\lambda \in \C$, we consider the exponential function $e_{\lambda}$ where $e_{\lambda}(t):=e^{\lambda t}$ for $t\in \R$. Observe that  the function $e_{\lambda}$ is expressed in a Hermite expansion by \begin{equation}\label{expo}e^{\lambda t}=\displaystyle\sum_{n=0}^{\infty}\frac{1}{2^n n!} \lambda^ne^{\frac{\lambda^2}{4}}H_n(t),\qquad t\in \R,\end{equation} for $\lambda\in \C$, see \cite[Example 2, p.74]{Lebedev}.

Now let $c(t):=\cos(\sqrt{a}\,t)$ with $a>0$ for $t\in\R.$
This function also satisfies the hypothesis of Theorem \ref{ExpEsc}, and we write
\begin{eqnarray}\label{coseno}
\cos(\sqrt{a}\,t)= \sum_{n=0}^{\infty}\frac{1}{2^{2n}(2n)!}(-a)^ne^{-\frac{a}{4}}H_{2n}(t), \qquad t\in \R.
\end{eqnarray}

Different aspects of Hermite expansions have been studied in the literature: for example, the asymptotic behavior of coefficients in \cite{Boyd};
expansions of analytic functions in  \cite{Rusev}; and estimations of coefficients and integrals related with Hermite expansions in \cite{Muckenhoupt1}.

Other approximation results  involve Hermite functions defined by $$\mathcal{H}_n(t):=\frac{1}{(2^n n!{\pi^{\frac{1}{2}}})^{1\over 2}}e^{-\frac{t^2}{2}}H_n(t),\quad t\in\R.$$ These functions form an orthonormal basis of the Hilbert space $L^2(\R).$ Moreover, let $f$ be in $L^p(\R),$ $\frac{4}{3}<p<4,$ and $a_k(f):=\int_{-\infty}^{\infty}f(t)\mathcal{H}_k(t)\,dt$ for $k\in \N\cup\{0\}$. Then $\lVert S_n(f)-f \rVert_p\to 0$ as $n\to\infty,$ with $$S_n(f):=\displaystyle\sum_{k=0}^{n}a_k(f)\mathcal{H}_k,\qquad n\in\N\cup\{0\},$$ see \cite[Theorem 2]{Askey}.

Also these Hermite series allow to approximate temperated distributions. Let $\mathcal S$ be the Schwartz class and $\mathcal S'$ the space of temperated distribution on $\R$. Then the  equality $$
T=\sum_{k=0}^\infty \langle T, \mathcal{H}_k\rangle \mathcal{H}_k
$$
holds in the weak sense, see for example \cite[pp. 143]{Reed}. Moreover,  Hermite expansions of Dirac distribution and the distribution principal value of ${1\over x}$ may be found in \cite[pp 191-193]{AMS} and \cite[Section 2]{CMO}. Hermite expansions of  products  of temperated distributions are considered in detail in \cite{CMO}.

In other hand, a $C_0$-group $(T(t))_{t\in\R}$ is a one parameter (strongly continuous) family of linear and bounded operators on a Banach space $X,$ which satisfies the exponential law, i.e., $T(t+s)=T(t)T(s)$ for all $t,s\in\R,$ and $T(0)=I.$ The (densely defined) operator $A$ given by $$Ax:=\displaystyle\lim_{t\to 0}\frac{T(t)x-x}{t},$$ when the limit exists ($x\in D(A)$), is called the infinitesimal generator of $C_0$-group; moreover, for $t\in\R$, $$\frac{d}{dt}T(t)x=T(t)Ax, \qquad x\in D(A),$$
see more details in monographs \cite{ABHN, Nagel}. This family of operators is interpreted as $(e^{tA})_{t\in\R},$ since it is the solution of the abstract Cauchy problem of first order. Likewise, it seems natural to consider identity (\ref{expo}) in the vector-valued version, and we have that $$T(t)x=\displaystyle\sum_{n=0}^{\infty}\frac{1}{2^{n} n!}A^nT^{(g)}\biggl(\frac{1}{4}\biggr)x\,H_{n}(t), \qquad t\in \R,$$ for $x\in D(A)$, Theorem \ref{loc} (ii), where $T^{(g)}(z)$ is a holomorphic $C_0$-semigroup of angle $\frac{\pi}{2}$ whose generator is $A^2.$  It would be nice to obtain the larger domain (might be the domain of fractional powers of $-A$, $D((-A)^\alpha)$, with $0<\alpha<1$) where the Hermite expansion converges.

We also consider the second order abstract Cauchy problem, whose solutions may be expressed in terms of cosine functions. A cosine function $(C(t))_{t\in \R}$ is a one-parameter (strongly continuous) family of linear and bounded operators on a Banach space $X,$ which satisfies $C(t+s)+C(t-s)=2C(t)C(s)$ for $s,t\in \R,$ and $C(0)=I.$ The (densely defined) operator $A$ given by $$Ax:=\displaystyle\lim_{t\to 0}\frac{2}{t^2}(C(t)x-x),$$ for $x\in D(A),$ is the generator of the cosine function; moreover, for $t\in\R$, $$\frac{d^2}{dt^2}C(t)x=C(t)Ax, \qquad x\in D(A),$$
see other details in \cite[Section 3.14]{ABHN}. In Theorem \ref{loc2} (ii), we show that
$$
C(t)x= \displaystyle\sum_{n=0}^{\infty}\frac{1}{2^{2n}(2n)!}A^n T^{(c)}\biggl(\frac{1}{4}\biggr)x\,H_{2n}(t), \qquad t\in\R,
$$
for $x\in D(A);$ in this case $A$ also generates a holomorphic $C_0$-semigroup of angle $\frac{\pi}{2}$, $(T^{(c)}(z))_{z\in \C^+}$. Note that this Hermite expansion extends the formula (\ref{coseno}).

In the literature, there exist many different approximations  of $C_0$-semigroups, as Euler, Yosida, Dunford-Segal or subdiagonal Pad\'{e} approximations, see \cite{Gomilko} and references in \cite{Abadias}. However  there are not some many  approximations of $C_0$-groups and cosine functions: stable rational approximations for exponential function are considered to treat hyperbolic problems, i.e., $C_0$-groups in  \cite{BT} and cosine functions in \cite[Section 4]{jara}. In \cite[Theorem 4.2]{Ba}, the author approximates the solution of fractional Cauchy problem of order $\alpha \ge 1$ and gives the rate of convergence;  the case of cosine functions is included for $\alpha=2$.

The paper is organized as follows. In the second section, we consider the functions $t\mapsto\frac{1}{2^n n!\sqrt{\pi}}e^{-t^2}H_n(t)$ (for $n\in \NN\cup\{0\}$) which have a key role in Theorem \ref{ExpEsc}. They satisfy interesting properties, similar to Hermite polynomials (Proposition \ref{propHerm}). We estimate their $p$-norm in Theorem \ref{main2} and obtain Hermite expansions for Dirichlet and Fej\'{e}r  kernels in Proposition \ref{kernel}. We also consider Hermite expansions in Lebesgue space $L^p(\RR)$ for $1\le p<\infty$.  Finally, we give a vector-valued version of Theorem \ref{ExpEsc} on an abstract Banach space $X$.

Main theorems of this paper appear in the third and forth  sections. In Theorem \ref{loc} and Theorem \ref{loc2}, we express $C_0$-groups and cosine functions though Hermite expansions. In Theorem \ref{rate1} and Theorem \ref{rate2}, we give the rate of the Hermite expansions to the $C_0$-group and cosine function respectively, which depends on the regularity of the initial data. We compare with the  Laguerre expansions  obtained for the case of $C_0$-semigroups in \cite[Theorem 5.2]{Abadias}. Two consequences of these Hermite expansions for $C_0$-groups and cosine functions are the Hermite expansions for F\'{e}jer operators (Corollary \ref{corola} and Theorem \ref{rate2} (iv)) and  series representations for subordinated holomorphic semigroups (Theorem \ref{loc} (iii) and \ref{loc2}(iii)).

 Although, cosine functions and $C_0$-semigroups differ really (see for example \cite{Bobrowski}), the nature of $C_0$-groups and cosine functions are quite similar. In the case of Hermite expansions, analogous results hold (compare Theorem \ref{loc} and \ref{loc2}) and both approaches are compatible, see Remark \ref{conn} and subsection \ref{sub}.

In the last section, we present some examples of $C_0$-groups and cosine functions and their Hermite expansions: shift and multiplication groups, cosine functions on sequence spaces, and matrix approach to cosine functions and $C_0$-groups. We also give some comments and open questions to motivate forthcoming papers in this topic.





\noindent {\bf Notation}. Given $1\leq p <\infty,$ let $L^p(\R)$ be the set of Lebesgue $p$-integrable functions, that is, $f$ is a measurable function and $$\Vert f \Vert_p:=\biggl(\int_{-\infty}^{\infty}|f(t)|^p\,dt\biggr)^{\frac{1}{p}}<\infty;$$ for $p=2$, remind that $L^2(\R)$ is a Hilbert space with $\langle\,\,,\,\, \rangle$ the usual inner product, and $L^{\infty}(\R)$ the set of essential bounded Lebesgue functions with the norm $\lVert f\rVert_{\infty}:=\displaystyle\hbox{esssup}_{t\in\R}|f(t)|.$ We call $C_0(\R)$ the set of continuous functions defined in $(-\infty,\infty)$ such that $\displaystyle\lim_{t\to \pm\infty}f(t)=0,$ with the norm $\lVert \ \rVert_{\infty}.$

\section{Hermite functions and Hermite expansions on Banach spaces}

\setcounter{theorem}{0}
\setcounter{equation}{0}

Hermite polynomials are solutions of second order differential equation \begin{equation}\label{ecdif}y''-2ty'+2ny=0, \qquad n\ge 0, \quad t\in \R.\end{equation} Furthermore, they satisfy the following condition of orthogonality: $$\frac{1}{2^n n!\sqrt{\pi}}\int_{-\infty}^{\infty}H_n(t)H_m(t)e^{-t^2}\,dt=\delta_{n,m},\qquad n,m\in \N\cup\{0\},$$ where $\delta_{n,m}$ is the Kronecker delta. These polynomials satisfy the following properties:
\begin{equation}\label{pro1}H_n(-t)=(-1)^nH_n(t),\ n\in \N\cup\{0\},\end{equation}
\begin{equation*}\label{pro2}H_{2m}(0)=(-1)^m\frac{(2m)!}{m!}\text{ and }H_{2m+1}(0)=0,\end{equation*}
\begin{equation}\label{pro4}H_{n+1}(t)=2tH_{n}(t)-2nH_{n-1}(t);\text{  }H_0(t)=1;\text{  }H_1(t)=2t;\end{equation}
for $t\in \R$.
\bigskip

From here on out, Muckenhoupt estimates for $\mathcal{H}_n$ and $H_n$, which are well known in the classical theory of orthogonal polynomials, will be used several times: there exist  constants $\gamma>0$ and $C>0$ (independent of $n$ and $t$) such that  \begin{equation}\label{mucken}
\vert \mathcal{H}_n(t)\vert\le\left\{\begin{array}{ll}
C(N^{\frac{1}{3}}+|N-t^2|)^{-\frac{1}{4}},&\text{ for }t^2\leq 2N;\\
C(e^{-\gamma t^2}),&\text{ for }t^2> 2N,
\end{array} \right.
\end{equation}
where  $N=2n+1$ for all  $n\geq 0$, see
\cite[Formula (2.4)]{Muckenhoupt}. Therefore, given $t\in \R$ and $n_0\in\N$ such that $t^2\leq 2(2n_0+1)$, there exists $C>0$ (independent of $n_0$ and $t$) such that  \begin{equation}\label{mucken2}|H_n(t)|\le C\biggl(\frac{e^{\frac{t^2}{2}}\sqrt{2^n n!}}{n^{\frac{1}{12}}}\biggr),\qquad n\geq n_0, \end{equation}
where we have applied the first inequality of Muckenhoupt estimates \eqref{mucken}.

Now we  consider briefly the following functions. The Dirichlet kernel $(d_t)_{t\in\R}$ is defined by $$d_t(s):=\frac{\sin(ts)}{\pi s}, \qquad s\in\R\setminus\{0\},$$ and  the Fej\'{e}r kernel $(f_t)_{t\in\R},$ by $$f_t(s):=\frac{1-\cos(ts)}{\pi s^2}, \qquad s\in\R\setminus\{0\}.$$  Note that $\lVert f_t\rVert_1 = \vert t\vert $ and $\displaystyle{f_t(s)=\int_0^t d_u(s)\,du},$ for $ t,\,s\in\R,$ see more details in \cite{Katnelson}. Observe that  Dirichlet and Fej\'{e}r kernels satisfy the conditions of Theorem \ref{ExpEsc} and we obtain their Hermite expansions.

\begin{proposition}\label{kernel} Fixed $s\in\R$, we have that:

 \begin{eqnarray*}
 d_t(s)&=&\sum_{n=1}^{\infty}\frac{(-1)^{n-1}s^{2n-2}e^{-\frac{s^2}{4}}}{2^{2n-1}(2n-1)!\pi}H_{2n-1}(t), \qquad t\in\R,\cr
 f_t(s)&=&\frac{1-e^{-\frac{s^2}{4}}}{\pi s^2}+\sum_{n=1}^{\infty}\frac{(-1)^{n-1}s^{2n-2}e^{-\frac{s^2}{4}}}{2^{2n}(2n)!\pi}H_{2n}(t), \qquad t\in\R.\cr
 \end{eqnarray*}
 \end{proposition}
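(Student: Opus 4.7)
The idea is to obtain both identities by specializing the scalar exponential Hermite expansion (\ref{expo}) to imaginary parameters. Set $\lambda=is$ (with $s\in\R$ fixed) in (\ref{expo}); since $\lambda^{2}/4=-s^{2}/4$ one gets
\begin{equation*}
e^{ist}=e^{-\frac{s^{2}}{4}}\sum_{n=0}^{\infty}\frac{(is)^{n}}{2^{n}n!}H_{n}(t),\qquad t\in\R.
\end{equation*}
Because the series converges pointwise (as a consequence of Theorem~\ref{ExpEsc}, applied to the differentiable, bounded function $t\mapsto e^{ist}$, which is certainly square-integrable against $e^{-t^{2}}$), we may split the sum into real and imaginary parts term by term.

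\textbf{Dirichlet kernel.} Taking imaginary parts leaves only the odd indices $n=2k-1$, where $i^{2k-1}=(-1)^{k-1}i$, and therefore
\begin{equation*}
\sin(ts)=e^{-\frac{s^{2}}{4}}\sum_{k=1}^{\infty}\frac{(-1)^{k-1}s^{2k-1}}{2^{2k-1}(2k-1)!}H_{2k-1}(t).
\end{equation*}
Dividing by $\pi s$ gives the expression claimed for $d_{t}(s)$, after rewriting the factor $s^{2k-1}/s$ as $s^{2k-2}$.

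\textbf{Fej\'er kernel.} Taking real parts picks out the even indices $n=2k$, where $i^{2k}=(-1)^{k}$, and yields
\begin{equation*}
\cos(ts)=e^{-\frac{s^{2}}{4}}\sum_{k=0}^{\infty}\frac{(-1)^{k}s^{2k}}{2^{2k}(2k)!}H_{2k}(t),
\end{equation*}
which is precisely (\ref{coseno}) with $a=s^{2}$. Isolating the $k=0$ term, which equals $e^{-s^{2}/4}H_{0}(t)=e^{-s^{2}/4}$, one writes
\begin{equation*}
1-\cos(ts)=\bigl(1-e^{-\frac{s^{2}}{4}}\bigr)+e^{-\frac{s^{2}}{4}}\sum_{k=1}^{\infty}\frac{(-1)^{k-1}s^{2k}}{2^{2k}(2k)!}H_{2k}(t),
\end{equation*}
and dividing by $\pi s^{2}$ produces the stated formula for $f_{t}(s)$, again after converting $s^{2k}/s^{2}$ into $s^{2k-2}$.

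\textbf{Main obstacle.} There is essentially no obstacle beyond organizing these elementary manipulations; the only point that deserves a line of justification is that the rearrangement into real and imaginary parts is legitimate, which is immediate once pointwise convergence of the complex series (\ref{expo}) at $\lambda=is$ has been invoked (and this, in turn, fits the scalar hypothesis of Theorem~\ref{ExpEsc}, since $t\mapsto d_{t}(s)$ and $t\mapsto f_{t}(s)$ are differentiable and uniformly bounded in $t$ for fixed $s\ne 0$, hence automatically square-integrable against the Gaussian weight).
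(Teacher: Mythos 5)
Your proof is correct, but it takes a genuinely different route from the paper. The paper computes the Hermite coefficients $c_n(d_{(\cdot)}(s))$ and $c_n(f_{(\cdot)}(s))$ directly: it uses the parity of $d_t(s)$ and $f_t(s)$ in $t$ to kill half the coefficients, writes $H_n$ via Rodrigues' formula, integrates by parts $2n-1$ (resp.\ $2n$) times to transfer the derivatives onto $\sin(ts)/(\pi s)$ (resp.\ the Fej\'er kernel), and evaluates the resulting Gaussian integrals $\int e^{-t^2}\cos(ts)\,dt=\sqrt{\pi}e^{-s^2/4}$; convergence of the resulting series to the kernel is then guaranteed by Theorem~\ref{ExpEsc}, which the authors note the kernels satisfy. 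You instead bypass the coefficient computation entirely by specializing the already-stated expansion (\ref{expo}) to $\lambda=is$, separating real and imaginary parts (legitimate term by term for a pointwise convergent complex series with real $H_n(t)$), and then dividing by $\pi s$ and $\pi s^2$; your Fej\'er formula is essentially (\ref{coseno}) with $a=s^2$ plus the isolation of the $k=0$ term. Your route is shorter and reuses the displayed formulas of the introduction, at the cost of not exhibiting the coefficients as the integrals $c_n(f)$ of Theorem~\ref{ExpEsc} (which is how the paper later uses them, e.g.\ in Corollary~\ref{corola}, where the bound on $\lVert c_{2n}(f_{(\cdot)})\rVert_1$ is computed from that integral representation); the paper's computation makes that identification explicit. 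Both arguments require $s\neq 0$, consistent with the domain on which the kernels are defined.
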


 \begin{proof}As $d_t(s)=-d_{-t}(s)$ for $t,s\in \R$, we have that $c_{2n}(d_{(\cdot)}(s))=0$ for $n\ge 0$ and
 \begin{eqnarray*}c_{2n-1}(d_{(\cdot)}(s))&=&\frac{(-1)^{2n-1}}{2^{2n-1}(2n-1)!\sqrt{\pi}}\int_{-\infty}^{\infty}\frac{d^{2n-1}}{dt^{2n-1}}(e^{-t^2})\frac{\sin(ts)}{\pi s}\, dt
 \\
&=&\frac{(-1)^{n-1}}{2^{2n-1}(2n-1)!\sqrt{\pi}}\int_{-\infty}^{\infty}e^{-t^2}s^{2n-2}\frac{\cos(ts)}{\pi}\, dt=\frac{(-1)^{n-1}s^{2n-2}e^{-\frac{s^2}{4}}}{2^{2n-1}(2n-1)!\pi},
\end{eqnarray*} for $n\geq 1,$ and $s\in \R$. Using similar ideas, we show that $c_{2n-1}(f_{(\cdot)}(s))=0$ for $n\ge 1$,
\begin{eqnarray*}c_0(f_{(\cdot)}(s))&=&\frac{1-e^{-\frac{s^2}{4}}}{\pi s^2}, \qquad\cr
c_{2n}(f_{(\cdot)}(s))&=&\frac{(-1)^{n-1}s^{2n-2}e^{-\frac{s^2}{4}}}{2^{2n}(2n)!\pi},
\end{eqnarray*}for  $s\in \R$ and $n\geq 1$.
\end{proof}

\begin{definition}
For $n\in \NN\cup\{0\}$, we denote by $h_n$ the family of functions defined by
$$
h_n(t):=\frac{1}{2^n n!\sqrt{\pi}}e^{-t^2}H_n(t), \qquad t\in \R.
$$
\end{definition}
Note that $$h_n(t)=\frac{1}{\sqrt{2^n n!\pi^{\frac{1}{2}}}}e^{-\frac{t^2}{2}}\mathcal{H}_n(t)=\frac{(-1)^n}{2^n n!\sqrt{\pi}}\frac{d^n}{dt^n}(e^{-t^2})(t),\qquad t\in \R,$$ with $n=0,1,2,\ldots$. As the Hermite polynomials, these functions satisfy recurrence relations and differential equations. We collect some of them  in the following proposition which we will use later. We apply formulae \eqref{pro4} and \eqref{ecdif} to show  parts (i) and (ii). The complete proof is left to the reader.

\begin{proposition}\label{propHerm} The family of functions $\{ h_n \}_{n\ge 0}$  satisfies:
\begin{itemize}
\item[(i)] $2(n+1)h_{n+1}(t)=2th_n(t)-h_{n-1}(t),$ for $t\in \R$ and $n\ge 1$.
\item[(ii)] $h_n''(t)+2th_n'(t)+2(n+1)h_n(t)=0, $ for $t\in \R$ and $n\ge 0$.
\item[(iii)] $h_n^{(k)}=(-1)^k2^k(n+1)\ldots(n+k)h_{n+k}$ for $n, k\ge 0$.
\end{itemize}
\end{proposition}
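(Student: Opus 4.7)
The plan is to reduce each identity to a known fact about the Hermite polynomials, exploiting the multiplicative form $h_n(t)=\frac{1}{2^n n!\sqrt{\pi}}e^{-t^2}H_n(t)$ together with the Rodrigues expression $h_n(t)=\frac{(-1)^n}{2^n n!\sqrt{\pi}}\frac{d^n}{dt^n}(e^{-t^2})$.

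For part (i), I would start from the three-term recurrence \eqref{pro4} applied at level $n+1$, namely $H_{n+1}(t)=2tH_n(t)-2nH_{n-1}(t)$, multiply through by $e^{-t^2}/(2^n n!\sqrt{\pi})$, and then match the resulting coefficients against the definitions of $h_n$ and $h_{n-1}$. The factor $2n/(2^n n!\sqrt{\pi})$ collapses to $1/(2^{n-1}(n-1)!\sqrt{\pi})$, producing exactly $-h_{n-1}(t)$, while the left-hand side, once rescaled by $2(n+1)$, becomes $2(n+1)h_{n+1}(t)$. This is pure bookkeeping with factorials and powers of $2$.

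For part (ii), I would differentiate $h_n=c_n e^{-t^2}H_n$ twice by the product rule, obtaining
\begin{equation*}
h_n''(t)=c_n e^{-t^2}\bigl[H_n''(t)-4tH_n'(t)+(4t^2-2)H_n(t)\bigr],
\end{equation*}
and likewise $h_n'(t)=c_n e^{-t^2}(H_n'(t)-2tH_n(t))$. Substituting into $h_n''+2th_n'+2(n+1)h_n$, the $4t^2$ and $-2t H_n'$ terms cancel in pairs, leaving
\begin{equation*}
c_n e^{-t^2}\bigl[H_n''(t)-2tH_n'(t)+2nH_n(t)\bigr],
\end{equation*}
which vanishes by the Hermite ODE \eqref{ecdif}. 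The identity then follows for every $n\ge 0$ and $t\in\R$.

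For part (iii), the cleanest route is the Rodrigues-type formula for $h_n$ noted right after the definition: differentiating it $k$ times yields
\begin{equation*}
h_n^{(k)}(t)=\frac{(-1)^n}{2^n n!\sqrt{\pi}}\frac{d^{n+k}}{dt^{n+k}}(e^{-t^2})(t),
\end{equation*}
and then re-expressing the $(n+k)$-th derivative via the Rodrigues formula for $h_{n+k}$ gives the factor $(-1)^{n+k}2^{n+k}(n+k)!\sqrt{\pi}$. The signs combine to $(-1)^k$, the power of two to $2^k$, and the factorials to $(n+k)!/n!=(n+1)(n+2)\cdots(n+k)$, which is the desired expression. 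Alternatively, one can argue by induction on $k$, the base case $k=1$ being $h_n'=-2(n+1)h_{n+1}$ and the step following from the same identity applied to $h_{n+k}$. No real obstacle is expected here; the only thing to be careful about is keeping track of the alternating signs, which is exactly why the Rodrigues form is preferable to direct differentiation of $e^{-t^2}H_n$.
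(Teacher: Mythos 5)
Your proposal is correct and follows exactly the route the paper indicates: it derives (i) from the three-term recurrence \eqref{pro4}, (ii) from the Hermite differential equation \eqref{ecdif} after a product-rule computation, and (iii) from the Rodrigues-type formula $h_n=\frac{(-1)^n}{2^n n!\sqrt{\pi}}\frac{d^n}{dt^n}(e^{-t^2})$ noted right after the definition. The factorial and sign bookkeeping in all three parts checks out, so this is a faithful completion of the proof the paper leaves to the reader.
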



\begin{theorem}\label{main2} Take $1\le p<\infty $.\begin{enumerate}

\item[(i)] The set of functions $ \{h_n\}_{n\ge 0} \subset L^p(\R)$ and
$$
\lVert h_n\rVert_{p}\leq  \frac{C_{p}}{\sqrt{2^n n!}} , \qquad n\ge 1.
$$



\item[(ii)] For $n\geq 1,$ $$\frac{n+1}{n} \displaystyle\max_{t\in\mathcal{Z}(H_n)}|h_{n+1}(t)|=\frac{1}{2n} \lVert h_{n-1} \rVert_{\infty}\le \lVert h_n \rVert_1$$ where $\mathcal{Z}(H_n)$ is the set of zeros of $H_n$.
\end{enumerate}
\end{theorem}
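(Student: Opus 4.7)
The plan is to treat both parts by routing through the Hermite function $\mathcal{H}_n$ via the identity
$$h_n(t)=\frac{1}{\sqrt{2^n n!\sqrt{\pi}}}\,e^{-t^2/2}\mathcal{H}_n(t)$$
already noted right after the Definition. For (i) this reduces the claim to a uniform-in-$n$ bound on $\int_{\R} e^{-pt^2/2}|\mathcal{H}_n(t)|^p\,dt$, which can be read off from Muckenhoupt's pointwise estimate \eqref{mucken}. For (ii) the same factorization together with the recurrence and derivative formulae of Proposition \ref{propHerm} gives both the equality and the inequality almost mechanically.

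For (i), from the displayed factorization,
$$\|h_n\|_p^p=\frac{1}{(2^n n!)^{p/2}\pi^{p/4}}\int_{-\infty}^{\infty}e^{-pt^2/2}|\mathcal{H}_n(t)|^p\,dt,$$
so it suffices to bound the last integral by a constant depending only on $p$. I split $\R$ at $|t|=\sqrt{2N}$, $N=2n+1$. On $\{t^2\le 2N\}$ the crude bound $(N^{1/3}+|N-t^2|)^{-1/4}\le N^{-1/12}$ from \eqref{mucken} paired with the integrable Gaussian $e^{-pt^2/2}$ yields a contribution bounded uniformly in $n$. On $\{t^2>2N\}$ the second branch of \eqref{mucken} gives $|\mathcal{H}_n(t)|^p\le C^pe^{-p\gamma t^2}$, another integrable Gaussian. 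Adding the two pieces produces the required constant $C_p$, so that $\|h_n\|_p\le C_p/\sqrt{2^n n!}$. The main delicacy is the turning-point region $t^2\approx N$, but since the $n$-independent weight $e^{-pt^2/2}$ concentrates near $0$, no sharper bound than $N^{-1/12}$ is needed there.

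For (ii), two observations from Proposition \ref{propHerm} do all the work. First, evaluating the recurrence (i), $2(n+1)h_{n+1}(t)=2th_n(t)-h_{n-1}(t)$, at a point $t_0\in\mathcal{Z}(H_n)$ (where $h_n(t_0)=0$) gives $|h_{n+1}(t_0)|=|h_{n-1}(t_0)|/(2(n+1))$, hence
$$\frac{n+1}{n}|h_{n+1}(t_0)|=\frac{1}{2n}|h_{n-1}(t_0)|,\qquad t_0\in\mathcal{Z}(H_n).$$
Second, the derivative formula (iii) with $k=1$ reads $h_{n-1}'=-2n h_n$, so the critical points of $h_{n-1}$ are exactly the zeros of $H_n$; because $h_{n-1}(t)\to 0$ as $|t|\to\infty$, the sup-norm of $|h_{n-1}|$ is attained at some zero of $H_n$. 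Taking maxima over $\mathcal{Z}(H_n)$ on both sides of the displayed equality then produces the claimed equality $\frac{n+1}{n}\max_{t\in\mathcal{Z}(H_n)}|h_{n+1}(t)|=\frac{1}{2n}\|h_{n-1}\|_\infty$.

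For the final inequality, I use $h_{n-1}'=-2n h_n$ together with $h_{n-1}(-\infty)=0$ and the fundamental theorem of calculus:
$$|h_{n-1}(t)|=\Bigl|\int_{-\infty}^{t}h_{n-1}'(s)\,ds\Bigr|\le 2n\int_{\R}|h_n(s)|\,ds=2n\,\|h_n\|_1,$$
and taking the supremum in $t$ gives $\|h_{n-1}\|_\infty\le 2n\,\|h_n\|_1$, i.e. $\frac{1}{2n}\|h_{n-1}\|_\infty\le\|h_n\|_1$. The only real obstacle in the whole proof is the careful bookkeeping with Muckenhoupt's estimate in part (i); part (ii) is essentially a clean consequence of Proposition \ref{propHerm} once one identifies the critical points of $h_{n-1}$ with the zeros of $H_n$.
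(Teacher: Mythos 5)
Your proposal is correct and follows essentially the same route as the paper: part (i) splits the integral at $t^2=2N$ and applies the two branches of Muckenhoupt's estimate \eqref{mucken}, and part (ii) combines the recurrence and derivative identities of Proposition \ref{propHerm} with the vanishing of $h_{n-1}$ at infinity and the fundamental theorem of calculus, exactly as in the paper's proof.
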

\begin{proof} (i)  Note that $|h_n|$ is an even function by \eqref{pro1}. Then we divide the integral in two parts to apply the Muckenhoupt estimates \eqref{mucken}. For $N=2n+1$, and $t^2\le 2N$, we have that $$\int_{0}^{\sqrt{2N}}|h_n(t)|^p\,dt\leq \frac{C_p}{(2^n n!\pi^{\frac{1}{2}})^{\frac{p}{2}}}\int_{0}^{\sqrt{2N}}e^{-\frac{pt^2}{2}}(N^{\frac{1}{3}}+|N-t^2|)^{-\frac{p}{4}}\,dt\leq \frac{C_p}{(2^n n!)^{\frac{p}{2}}n^{\frac{p}{12}}}.$$ On the other hand, we have that $$\int_{\sqrt{2N}}^{\infty}|h_n(t)|^p\,dt\leq \frac{C_{p,\gamma}}{(2^n n!\pi^{\frac{1}{2}})^{\frac{p}{2}}}\int_{\sqrt{2N}}^{\infty}e^{-\frac{pt^2}{2}}e^{-\gamma p t^2}\,dt\leq \frac{C_p}{(2^n n!)^{\frac{p}{2}}},$$
and we obtain the inequality for $1\le p<\infty$.





\item[(ii)]  For $n\ge 1$, note that $$|h_{n-1}(t)|\leq \int_t^{\infty}|h_{n-1}'(s)|\,ds\leq 2 n\lVert h_{n}\rVert_1,\quad t\in \R,$$
where we have applied Proposition \ref{propHerm} (iii) to get the second inequality. To show the first equality,  due to $\displaystyle\lim_{t\to \pm\infty}h_{n-1}(t)=0$, we obtain that
 \begin{eqnarray*} \lVert h_{n-1} \rVert_{\infty} &=&\displaystyle\max_{\{ t\in\R\ |\ (h_{n-1})'(t)=0 \}}\vert h_{n-1}(t)\vert
=\displaystyle\max_{\{ t\in\R^+\ |\ h_n(t)=0 \}}\vert h_{n-1}(t)\vert\cr &=&2(n+1)\displaystyle\max_{\{ t\in\R^+\ |\ h_n(t)=0 \}}\vert h_{n+1}(t)\vert =2(n+1) \displaystyle\max_{\{ t\in\R^+\ |\ H_{n}(t)=0 \}}\vert h_{n+1}(t)\vert,
\end{eqnarray*}
where we have used Proposition \ref{propHerm} (iii) and (i) to get the result.
\end{proof}

\begin{remark}\label{remark}{\rm For $p=1$, we apply the Cauchy-Schwartz inequality  and $\{\mathcal{H}_n\}_{n\geq 0}$ is an orthonormal basis on $L^2(\R)$ to get a direct proof:
 $$ \Vert h_n\Vert_1=\frac{1}{\sqrt{2^nn!\pi^{\frac{1}{2}}}}\int_{-\infty}^\infty e^{-\frac{t^2}{2}}|\mathcal{H}_n(t)|\,dt\leq
\frac{1}{\sqrt{2^n n!\pi^{\frac{1}{2}}}}\left(\int_{-\infty}^\infty e^{-t^2}\, dt\right)^{1\over 2}=\frac{1}{\sqrt{2^n n!}},$$
for $n\ge 0$.}
\end{remark}

Now we consider the expansions of certain functions                                                                                                                                                                                                                                                                                                                                                                                                                                                                                                                                                                                                                                                                                                                                                                                                                                                                                                                                                                                                                                                                                                                                                                                                                                                                                                                                                                                                                                                                                                                                                                                                                                                                                                                                                                                                                                                                                                                                                                                                                                                                                                                                                                                                                                                                                                                                                                                                                                                                                                                                                                                                                                                                                                                                                                                                                                                                                                                                                                                                                                                                                                                                                                                                                                                                                                                                                                                                                                                                                                                                                                                                                                                                                                                                                                                                                                                                                                                                                                                                                                                                                                                                                                                                                                                                                                                                                                                                                                                                                                                                                                                                                                                                                                                                                                                                                                                                                                                                                                                                                                                                                                                                                                                                                                                                                                                                                                                                                                                                                                                                                                                                                                                                                                                                                                                                                                                                                                                                                                                                                                                                                                                                                                                                                                                                                                                                                                                                                                                                                                                                                                                                                                                                                                                                                                                                                                                                                                                                                                                                                                                                                                                                                                                                                                                                                                                                                                                                                                                                                                                                                                                                                                                                                                                                                                                                                                                                                                                                                                                                                                                                                                                                                                                                                                                                                                                                                                                                                                                                                                             Hermite polynomials in spaces $L^p(\R)$. This result will be used to prove that the span$\{h_n \,\, \vert {n\ge 0}\}$ is dense for all $1\leq p<\infty$. Observe the set  $ \hbox{span}\{t^n e^{-t^2}\}_{n\ge 0}$ is dense in $L^1(\R)$ (\cite[Theorem 5.7.2]{Szego}) and in $L^2(\R)$ (\cite[Theorem 5.7.1]{Szego}).

\begin{theorem} \label{densi} Take $1\leq p < \infty.$
\begin{itemize}
\item[(i)] Let $\lambda\in\C$, and $\eta_{\lambda}(t):= e^{-t^2} e^{\lambda t}$ for $t\in\R$. Then we have that
$$\eta_{\lambda}=\sqrt{\pi}e^{\frac{\lambda^2}{4}}\displaystyle\sum_{n=0}^{\infty}\lambda^n h_n \hbox{ in $L^p(\R)$.}$$
\item[(ii)] The set span$\{h_n \,\, \vert {n\ge 0}\}$ is dense in $L^p(\R).$
\end{itemize}
\end{theorem}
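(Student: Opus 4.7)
The natural starting point is the pointwise exponential expansion \eqref{expo}. Multiplying both sides by $e^{-t^2}/\sqrt{\pi}$ absorbs the factor $(2^n n! \sqrt{\pi})^{-1}e^{-t^2}H_n(t) = h_n(t)$, giving the pointwise identity
$$\eta_\lambda(t) = \sqrt{\pi}\,e^{\lambda^2/4}\sum_{n=0}^{\infty}\lambda^n h_n(t), \qquad t\in\R.$$
To upgrade this to convergence in $L^p(\R)$ I would invoke the norm estimate $\|h_n\|_p \le C_p/\sqrt{2^n n!}$ from Theorem \ref{main2}(i). Then
$$\sum_{n=0}^\infty |\lambda|^n \|h_n\|_p \le C_p\sum_{n=0}^\infty \frac{|\lambda|^n}{\sqrt{2^n n!}},$$
and the right-hand side is finite for every $\lambda\in\C$ by the ratio test (the ratio of consecutive terms is $|\lambda|/\sqrt{2(n+1)}\to 0$). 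Hence $\sum_{n=0}^\infty \lambda^n h_n$ converges absolutely in $L^p(\R)$; by extracting an a.e.\ convergent subsequence its $L^p$-sum agrees with the pointwise limit almost everywhere, so the identity in (i) holds as an equality in $L^p(\R)$.

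\textbf{Plan for (ii).} I would deduce density from (i) by a Hahn--Banach / Fourier-uniqueness argument. It suffices to show that the annihilator of $\mathrm{span}\{h_n : n\ge 0\}$ in the dual space is trivial. Take $g$ in the dual of $L^p(\R)$, i.e. $g\in L^{p'}(\R)$ for $1<p<\infty$ and $g\in L^{\infty}(\R)$ when $p=1$, and assume
$$\int_{-\infty}^\infty g(t)\, h_n(t)\,dt = 0, \qquad n\ge 0.$$
Specialising (i) to $\lambda = i\xi$ with $\xi\in\R$ and using that the pairing $\langle\,\cdot\,,g\rangle$ is continuous on $L^p(\R)$, I may integrate term-by-term against $g$ to conclude
$$\int_{-\infty}^\infty g(t)\,e^{-t^2}\,e^{i\xi t}\,dt = 0 \quad\text{for every } \xi\in\R.$$
A direct application of Hölder's inequality (or the trivial bound $|g(t)|e^{-t^2}\le \|g\|_\infty e^{-t^2}$ when $p=1$) shows that $g(t)e^{-t^2}\in L^1(\R)$, so the left-hand side is the Fourier transform of $g(\cdot)e^{-(\cdot)^2}$ at $-\xi$. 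Uniqueness of the Fourier transform on $L^1(\R)$ then yields $g(t)e^{-t^2}=0$ almost everywhere, hence $g=0$ in $L^{p'}(\R)$. By Hahn--Banach this proves (ii).

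\textbf{Main obstacle.} The only point that really needs care is justifying the term-by-term integration against $g$ when passing from (i) to the annihilation condition on $\eta_{i\xi}$. This rests precisely on the $L^p$-convergence established in (i): absolute summability in $L^p(\R)$ together with continuity of the $L^{p'}$--$L^p$ pairing legitimises the exchange of sum and integral. The endpoint case $p=1$ (where the dual is $L^\infty$) is slightly more delicate but still routine, since the Gaussian weight guarantees the required integrability of $g\cdot e^{-t^2}$.
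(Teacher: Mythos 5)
Your proposal is correct and follows essentially the same route as the paper: pointwise identity from \eqref{expo}, upgraded to $L^p$-convergence via the bound $\lVert h_n\rVert_p\le C_p/\sqrt{2^n n!}$ from Theorem \ref{main2}(i) (you use the ratio test where the paper invokes Stirling's formula, which is an immaterial difference), and then Hahn--Banach combined with injectivity of the Fourier transform on $L^1(\R)$ for the density statement. The only cosmetic difference is that you test a functional annihilating the $h_n$ directly against $\eta_{i\xi}$, whereas the paper first reduces to density of $\mathrm{span}\{e^{-t^2}e^{i\lambda t}\}$; these are equivalent.
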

\begin{proof} (i) By (\ref{expo}), we get that $e^{-t^2}e^{\lambda t}=\sqrt{\pi}e^{\frac{\lambda^2}{4}}\displaystyle\sum_{n=0}^{\infty}\lambda^n h_n(t)$ pointwise for  $\lambda \in\C$. This convergence is in $L^p(\R)$ since
$$\lVert \displaystyle\sum_{n=0}^{\infty}\lambda^n h_n \rVert_p\leq C_{p}\displaystyle\sum_{n=0}^{\infty}\frac{|\lambda|^n}{\sqrt{2^n n!}}\leq C_{p}\displaystyle\sum_{n=0}^{\infty}\frac{(|\lambda|\sqrt{e})^n}{\sqrt{2^n} n^{{n\over 2}+\frac{1}{4}}}< \infty, \qquad \lambda\in\C,$$
where we have applied Theorem \ref{main2} (i) and  Stirling's formula.

(ii) Using the part (i), it is enough to see that span$\{e^{-t^2} e^{\lambda i t} \,\, \vert {\lambda\in\R}\}$ is dense in $L^p(\R)$ to get the result. To do this, we apply the Hahn-Banach Theorem. Let $f\in L^q(\R)$ with $\frac{1}{p}+\frac{1}{q}=1$ such that  $$\int_{-\infty}^{\infty}f(t)e^{-t^2}e^{\lambda i t}\,dt=0, \qquad \lambda \in\R.$$  By  H\"{o}lder inequality,  $f \eta_0\in L^1(\R)$ and then $0=\mathcal{F}(f\eta_0)(-\frac{\lambda}{2\pi})$ for all $\lambda\in\R,$ where $$\mathcal{F}(g)(\xi):=\displaystyle\int_{-\infty}^{\infty}g(t)e^{-2\pi i \xi t}\,dt,\qquad \xi\in\R,\,g\in L^1(\R),$$ is the classical Fourier transform. Since the Fourier transform is injective in $L^1(\R)$, we conclude that $f= 0$.
\end{proof}

To show the main theorem of this section, Theorem \ref{expvect},  we adapt the  proof given in \cite[Theorem 2, p.71] {Lebedev} in the scalar case to the vector-valued setting. Note that this proof is based in a technical lemma which also has to be reproved to the vector-valued case \cite[Lemma, p.68] {Lebedev}. We give the statement of this new lemma  and we avoid its proof which runs parallel to the original one.
\begin{lemma} \label{lematec} Let $(X, \Vert \quad\Vert)$ be a Banach space and $\phi:\RR \to X$  a continuous function such that $ \displaystyle{\int_{-\infty}^{\infty}(1+t^2)e^{-t^2}\Vert \phi(t)\Vert^2 dt<\infty.}$ Then
$$
\lim_{n\to \infty}{n^{1\over 4}\over(2^n n!\sqrt{\pi})^{1\over 2}}\int_{-\infty}^{\infty}e^{-t^2}H_n(t)\phi(t)dt=0.
$$

\end{lemma}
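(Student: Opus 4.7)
The plan is to reduce the statement to a question about Hermite functions and then combine a uniform norm-bound with fast decay on a dense subclass. Setting $\psi(t) := e^{-t^2/2}\phi(t)$ and recalling $\mathcal{H}_n(t) = (2^n n!\sqrt{\pi})^{-1/2} e^{-t^2/2} H_n(t)$, the quantity to control becomes
\[
n^{1/4}\Big\|\int_{-\infty}^{\infty}\mathcal{H}_n(t)\psi(t)\,dt\Big\|_X,
\]
while the hypothesis reads $\|\psi\|_w^2 := \int_{-\infty}^{\infty}(1+t^2)\|\psi(t)\|^2\,dt < \infty$.

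First I would establish the uniform estimate $n^{1/4}\|\int \mathcal{H}_n\psi\,dt\|_X \le C\|\psi\|_w$, with $C$ independent of $\psi$. By Cauchy--Schwarz this reduces to showing $\int_{-\infty}^{\infty} |\mathcal{H}_n(t)|^2/(1+t^2)\,dt \le C/\sqrt{n}$, which is a case-split using the Muckenhoupt bounds \eqref{mucken} with $N = 2n+1$: on $|t| \le \sqrt{N/2}$, the inequality $|N-t^2|\ge N/2$ gives $|\mathcal{H}_n|^2 \le C N^{-1/2}$, producing an $O(N^{-1/2})$ contribution; on the annulus $\sqrt{N/2}\le|t|\le\sqrt{2N}$, the substitution $u = N-t^2$ together with $1+t^2\asymp N$ yields an $O(N^{-1})$ contribution; and on $|t| > \sqrt{2N}$ the Gaussian tail is exponentially small.

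Second, for $\psi \in C_c^\infty(\R; X)$ --- a dense subclass of the weighted space --- I would prove the much stronger decay $n^k\|\int \mathcal{H}_n\psi\,dt\|_X \to 0$ for every $k \ge 0$. This follows from the Hermite eigenvalue identity $-\mathcal{H}_n''(t) + t^2\mathcal{H}_n(t) = (2n+1)\mathcal{H}_n(t)$: iterated integrations by parts (with boundary terms vanishing by compact support) produce
\[
(2n+1)^k\int \mathcal{H}_n(t)\psi(t)\,dt = \int \mathcal{H}_n(t)\,\big(-\partial_t^2 + t^2\big)^k\psi(t)\,dt,
\]
and combining with the Muckenhoupt uniform bound $\|\mathcal{H}_n\|_\infty = O(n^{-1/12})$ yields $\|\int \mathcal{H}_n\psi\,dt\|_X = O(n^{-k-1/12})$, which vanishes faster than any negative power of $n$.

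Finally, given $\varepsilon > 0$, I would truncate $\psi$ to a compact interval $[-M,M]$ and smooth by scalar convolution to produce $\psi_\varepsilon \in C_c^\infty(\R;X)$ with $\|\psi - \psi_\varepsilon\|_w < \varepsilon$. Splitting
\[
n^{1/4}\Big\|\int \mathcal{H}_n\psi\Big\|_X \le n^{1/4}\Big\|\int \mathcal{H}_n(\psi-\psi_\varepsilon)\Big\|_X + n^{1/4}\Big\|\int \mathcal{H}_n\psi_\varepsilon\Big\|_X \le C\varepsilon + o(1),
\]
then taking $\limsup_n$ and letting $\varepsilon\downarrow 0$ concludes. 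The most delicate point is this density step in the Banach-valued weighted $L^2$-setting: one must justify that every continuous $\phi:\R \to X$ satisfying the integrability hypothesis is approximable in the weighted norm by $C_c^\infty(\R;X)$-functions, which is standard via truncation followed by scalar mollification but requires some care with Bochner-measurability in the vector-valued setup.
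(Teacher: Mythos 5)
Your proof is correct, and it takes a genuinely different route from the one the paper relies on. The paper does not actually write out a proof of Lemma \ref{lematec}: it only states that the argument ``runs parallel'' to the scalar lemma in Lebedev, whose proof rests on Bessel's inequality in the Hilbert space $L^2(e^{-t^2}\,dt)$ applied simultaneously to $f$ and to $tf$, together with the three-term recurrence $H_{n+1}(t)=2tH_n(t)-2nH_{n-1}(t)$ to extract the extra factor $n^{1/4}$. You instead split the conclusion into a \emph{uniform} bound $n^{1/4}\Vert\int\mathcal{H}_n\psi\,dt\Vert\le C\bigl(\int(1+t^2)\Vert\psi(t)\Vert^2dt\bigr)^{1/2}$ --- obtained from the Muckenhoupt estimates \eqref{mucken} via $\int|\mathcal{H}_n(t)|^2(1+t^2)^{-1}dt=O(n^{-1/2})$, whose three-region case analysis I have checked and which is correct --- plus superpolynomial decay on the dense subclass $C_c^\infty(\R;X)$ via the eigenvalue identity $-\mathcal{H}_n''+t^2\mathcal{H}_n=(2n+1)\mathcal{H}_n$, and you conclude by approximation. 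What each approach buys: Lebedev's route is shorter in the scalar case but leans on orthogonality, and its vector-valued transplantation is not completely automatic (Bessel's inequality is a Hilbert-space fact, and the naive reduction through functionals $x^*\in X^*$ only yields convergence of $\langle x^*,\cdot\rangle$ for each fixed $x^*$, not norm convergence, because the tail decay rate in Bessel's inequality depends on the scalar function and hence on $x^*$); your route is longer but entirely norm-based, so it transfers to an arbitrary Banach space with no hidden Hilbert-space structure, at the cost of invoking the Muckenhoupt bounds (which the paper already uses elsewhere) and of the routine but nontrivial truncation-and-mollification step in the weighted Bochner space, which you correctly flag as the delicate point and which is indeed standard for continuous $X$-valued functions.
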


We need to remind the differentiability of vector-valued functions to give a vector-valued version of classical Theorem \ref{ExpEsc}. Let  $f:\R\to X$ be a vector-valued function, we say that $f$ is differentiable at $t$ if exists $$\displaystyle\lim_{h\to 0}\frac{1}{h}(f(t+h)-f(t))$$ on $(X, \Vert \quad\Vert)$. In this case, $f$ is continuous at $t$, see more details, for example in \cite[Chapter 1]{ABHN}.

\begin{theorem}\label{expvect} Let $X$ be a Banach space and $f:\R\to X$ a differentiable function such that the integral $\displaystyle{\int_{-\infty}^{\infty}e^{-t^2}\lVert f(t)\rVert^2\,dt}$ is finite, then the series $\displaystyle\sum_{n=0}^{\infty}c_n(f) H_n(t),$ with $$c_n(f)=\int_{-\infty}^{\infty}h_n(t)f(t)\,dt,$$ converges pointwise to $f(t)$ for $t\in \RR.$
\end{theorem}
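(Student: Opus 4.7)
The plan is to adapt the classical proof of \cite[Theorem~2, p.\,71]{Lebedev} to the Banach-space setting, using Lemma \ref{lematec} as the key technical ingredient exactly where the scalar proof uses its scalar analogue. Fix $t\in\R$ and write the $N$-th partial sum as
$$
S_N(t):=\sum_{n=0}^N c_n(f)H_n(t)=\int_{-\infty}^{\infty}K_N(t,s)f(s)\,ds,\qquad K_N(t,s):=\frac{e^{-s^2}}{\sqrt{\pi}}\sum_{n=0}^{N}\frac{H_n(s)H_n(t)}{2^n n!}.
$$
The Christoffel--Darboux identity for Hermite polynomials gives
$$
(t-s)K_N(t,s)=\frac{e^{-s^2}}{\sqrt{\pi}\,2^{N+1}N!}\bigl[H_{N+1}(t)H_N(s)-H_N(t)H_{N+1}(s)\bigr].
$$
A direct calculation using Rodrigues' formula, $e^{-s^2}H_n(s)=(-1)^n\frac{d^n}{ds^n}(e^{-s^2})$, shows that $\int_{-\infty}^{\infty}K_N(t,s)\,ds=1$ for all $N$ (every term with $n\ge 1$ contributes zero), so that
$$
S_N(t)-f(t)=\int_{-\infty}^{\infty}K_N(t,s)\bigl(f(s)-f(t)\bigr)\,ds.
$$

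Next, I would introduce the auxiliary function $\phi_t(s):=(f(s)-f(t))/(s-t)$ for $s\ne t$ and $\phi_t(t):=f'(t)$; differentiability of $f$ at $t$ makes $\phi_t$ continuous on $\R$. Substituting $f(s)-f(t)=(s-t)\phi_t(s)$ and using Christoffel--Darboux to cancel the factor $s-t$, one obtains
$$
S_N(t)-f(t)=\frac{H_N(t)}{\sqrt{\pi}\,2^{N+1}N!}I_{N+1}(t)-\frac{H_{N+1}(t)}{\sqrt{\pi}\,2^{N+1}N!}I_N(t),
$$
where $I_n(t):=\int_{-\infty}^{\infty}e^{-s^2}H_n(s)\phi_t(s)\,ds$. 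It remains to show that each summand tends to $0$ in norm as $N\to\infty$.

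Before invoking Lemma \ref{lematec} on $\phi_t$, I would verify its hypothesis by splitting $\R$ into $\{|s-t|\le 1\}$ and $\{|s-t|>1\}$. On the first (bounded) set, continuity of $\phi_t$ makes $(1+s^2)e^{-s^2}\|\phi_t(s)\|^2$ bounded, hence integrable. On the second set, the ratio $(1+s^2)/(s-t)^2$ is bounded by some constant $C_t$, so
$$
(1+s^2)e^{-s^2}\|\phi_t(s)\|^2\le C_t e^{-s^2}\bigl(2\|f(s)\|^2+2\|f(t)\|^2\bigr),
$$
which is integrable by the hypothesis on $f$ together with Gaussian decay. Lemma \ref{lematec} then yields $I_n(t)=o\bigl(n^{-1/4}\sqrt{2^n n!\sqrt{\pi}}\bigr)$.

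Finally, since $t$ is fixed, Muckenhoupt's estimate \eqref{mucken2} applies for all sufficiently large $N$ and gives
$$
\frac{|H_{N+1}(t)|}{2^{N+1}N!}\le C_t\,\frac{(N+1)^{1/4}}{\sqrt{2^{N+1}N!}},
$$
and the analogous bound for $H_N(t)$. Multiplying this with the decay of $I_N(t)$ makes each term $o(1)$, which proves $S_N(t)\to f(t)$. The main obstacle I foresee is precisely this balance: both the Muckenhoupt control of $H_N(t)/\sqrt{2^N N!}$ and the extra $n^{-1/4}$ saving coming from Lemma \ref{lematec} are sharp, and the normalisation constants must be tracked carefully so that their product is genuinely $o(1)$; the vector-valued framework otherwise only changes absolute values into norms in the Banach space $X$.
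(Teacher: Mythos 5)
Your proposal follows essentially the same route as the paper: Christoffel--Darboux for the kernel, the normalisation $\int e^{-s^2}K_N(t,s)\,ds=1$, the difference quotient $\phi_t$, verification of the weighted square-integrability, and two applications of Lemma \ref{lematec}, with the final balance closed by the pointwise bound $|H_N(t)|\le C_t\sqrt{2^N N!}\,N^{-1/4}$. One small caution: that last bound does not follow from \eqref{mucken2} as literally stated (which only gives $N^{-1/12}$, insufficient to offset the $o(N^{-1/4})$ saving from the lemma); for fixed $t$ it does follow from the first line of \eqref{mucken} since $|2N+1-t^2|\gtrsim N$ eventually, and the paper instead invokes the asymptotic \cite[(4.14.9), p.~67]{Lebedev} together with Stirling's formula at this step.
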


\begin{proof} By the Cauchy-Schwartz inequality, we get that  $c_n(f)\in X,$ $$\lVert c_n(f) \rVert\leq (\lVert \mathcal{H}_n \rVert_2)^{\frac{1}{2}}\left(\frac{1}{2^n n!\sqrt{\pi}}\displaystyle\int_{-\infty}^{\infty}e^{-t^2}\lVert f(t) \rVert^2dt\right)^{\frac{1}{2}}=\left(\frac{1}{2^n n!\sqrt{\pi}}\displaystyle\int_{-\infty}^{\infty}e^{-t^2}\lVert f(t) \rVert^2dt\right)^{\frac{1}{2}}<\infty, $$ where we have applied that $\{\mathcal{H}_n\}_{n\ge 0}$ is a orthonormal basis in $L^2(\R)$ and  $f$ satisfies the hypothesis.

Let $S_m(f)$ be the sum of the  first $m+1$ terms of the series, $$S_m(f)(t):=\sum_{n=0}^m c_n(f)H_n(t)=\displaystyle\int_{-\infty}^{\infty}e^{-y^2}K_{m}(t,y)f(y)\,dy,\qquad t\in \RR,$$ where $$K_m(t,y)=\displaystyle\sum_{n=0}^m\frac{1}{2^n n!\sqrt{\pi}}H_n(t)H_n(y), \qquad t,y\in \RR.$$
Note that $K_m(t,y)=K_m(y,t)$, $\displaystyle\int_{-\infty}^{\infty}e^{-y^2}K_m(t,y)\,dy=1$  and $$K_m(t,y)=\frac{H_{m+1}(t)H_m(y)-H_{m+1}(y)H_m(t)}{(t-y)2^{m+1}m!\sqrt{\pi}}, \qquad t\not=y \in \RR,$$ see these properties in \cite[p. 71-72]{Lebedev}.

Now, we write \begin{eqnarray*}
&\quad&S_m(f)(t)-f(t)=\int_{-\infty}^{\infty}e^{-y^2}K_m(t,y)(f(y)-f(t))\,dy \\
&\qquad&=\frac{1}{2^{m+1}m! \sqrt{\pi}}\left(H_{m}(t)\int_{-\infty}^{\infty}e^{-y^2}H_{m+1}(y)\phi(t,y)\,dy -H_{m+1}(t)\int_{-\infty}^{\infty}e^{-y^2}H_{m}(y)\phi(t,y)\,dy \right),
\end{eqnarray*} where $$\phi(t,y)=\frac{f(y)-f(t)}{y-t}, \qquad t\not=y \in \RR.$$ Observe that as function of $y,$  $\phi(t, \cdot)$ is continuous in $\R$ for any $t\in \RR$, $$\displaystyle\int_{-\infty}^{\infty}(1+y^2)e^{-y^2}\lVert\phi(t,y)\rVert^2\,dy<\infty, \qquad t\in\RR,$$  since $\phi(t,\cdot)$ is bounded in any neighborhood of $y=t,$ and for sufficiently large $b>t,$ \begin{eqnarray*}
\displaystyle\int_b^{\infty}(1+y^2)e^{-y^2}\lVert\phi(t,y)\rVert^2\,dy&=&O(1)\displaystyle\int_b^{\infty}e^{-y^2}(\lVert f(y) \rVert^2+\lVert f(t) \rVert^2)\,dy
\end{eqnarray*}
is finite by hypothesis. Similarly we estimate the integral  for the interval $(-\infty,b)$. Then we apply twice the Lemma \ref{lematec} to the function $\phi(t, \cdot)$ to get that
\begin{eqnarray*}
\displaystyle\lim_{m\to\infty} \frac{(m+1)^{\frac{1}{4}}}{(2^{m+1}(m+1)!\sqrt{\pi})^{\frac{1}{2}}}\int_{-\infty}^{\infty}e^{-y^2}H_{m+1}(y)\phi(t,y)\,dy &=&0,\cr
\displaystyle\lim_{m\to\infty}\frac{m^{\frac{1}{4}}}{(2^{m}m!\sqrt{\pi})^{\frac{1}{2}}}\int_{-\infty}^{\infty}e^{-y^2}H_{m}(y)\phi(t,y)\,dy&=&0.
\end{eqnarray*}

Finally, the following expressions remain bounded when $m\to\infty:$ $$
\frac{(2^{m+1}(m+1)!\sqrt{\pi})^{\frac{1}{2}}}{(m+1)^{\frac{1}{4}}}\frac{H_m(t)}{2^{m+1}m!\sqrt{\pi}},\ \ \frac{(2^{m}m!\sqrt{\pi})^{\frac{1}{2}}}{m^{\frac{1}{4}}}\frac{H_{m+1}(t)}{2^{m+1}m!\sqrt{\pi}},
$$
using \cite[(4.14.9), p.67]{Lebedev} and Stirling's formula. We conclude that $\displaystyle\lim_{m\to\infty}\lVert S_m(f)(t)-f(t)\rVert=0,$ for $t\in \RR$.
\end{proof}

\begin{remark} {\rm A straighforward application of Theorem \ref{ExpEsc} allows to obtain the weak convergence of the partial serie $S_m(f)(t)$ to the function $f(t)$ for $t\in \RR$.}

\end{remark}

\section{Hermite expansions for $C_0$-groups}

\setcounter{theorem}{0}
\setcounter{equation}{0}

In this section, we approximate $C_0$-groups by their Hermite expansions. A different approach, using stable rational approximations, is posed in \cite{BT}. The rate of convergence depends on the smoothness of initial data in both cases, compare  \cite[Theorem 3]{BT} and  Theorem \ref{rate1} (ii). We also give  a new serie representation for holomorphic semigroups (Theorem \ref{loc} (iii)) and the Hermite expansion  for F\'{e}jer operators (Corollary \ref{corola}),  both families of operators are subordinated to the initial $C_0$-group.

First of all, we give some basic results from $C_0$-group theory. Given $(A, D(A))$ a closed operator  on a Banach space $X,$ the resolvent operator $\lambda\to (\lambda-A)^{-1}$ is analytic in the resolvent set, and $$\frac{d^n}{d\lambda^n}(\lambda-A)^{-1}x=(-1)^n n! (\lambda-A)^{-n-1}x\ \text{ for all }n\in\N,\quad x\in X,$$ see \cite[p.240]{Nagel}.
  It is known that $A$ generates a $C_0$-group, $(T(t))_{t\in\R}$, if and only if $\pm A$ generates a $C_0$-semigroup. Every $C_0$-group is exponentially bound, i.e., there exist $M, \omega\ge 0$ such that $\lVert T(t)\rVert\leq M e^{w|t|}$ for all $t\in\R.$ For $\alpha>0$ and $\Re(\lambda)>w,$  the fractional powers of the resolvent operator is defined by
\begin{equation*}\label{resolvent}
(\lambda\mp A)^{-\alpha}x:=\frac{1}{\Gamma(\alpha)}\int_0^{\infty}t^{\alpha-1}e^{-\lambda t}T(\pm t)x\,dt, \qquad x\in X, \end{equation*}
see \cite[Proposition 11.1]{Komatsu}. Note that the operator $(A^2, D(A^2))$ generates a holomorphic $C_0$-semigroup of angle $\frac{\pi}{2}$, $$T^{(g)}(z)x:=\frac{1}{\sqrt{4\pi z}}\int_{-\infty}^{\infty}e^{\frac{-t^2}{4z}}T(t)x\,dt,\qquad \Re(z)>0,   $$ for  $x\in X,$ see \cite[Example 3.14.15 and Theorem 3.14.17]{ABHN}. We say that a $C_0$-group is uniformly bounded if $\lVert T(t)\rVert\leq M$ for all $t\in \R.$  For further details see, for example, monographies \cite{ABHN, Nagel}.

\begin{theorem}\label{loc} Let $(T(t))_{t\in\R}$ be a $C_0$-group on a Banach space  $X$ with infinitesimal generator $(A, D(A)).$
 \begin{itemize}

 \item[(i)]For $n\in \NN\cup\{0\}$, we get that
 $$
  \int_{-\infty}^{\infty}h_n(t)T(t)x\,dt=\frac{1}{2^n n!}A^n T^{(g)}\biggl(\frac{1}{4}\biggr)x, \qquad x\in X.
  $$ In the case that $\sup_{t\in \R}\Vert T(t)\Vert<\infty$, we have that
  $$
  \Vert A^n T^{(g)}\biggl(\frac{1}{4}\biggr)x\Vert \le \left(\sup_{t\in \R}\Vert T(t)\Vert\right)\sqrt{2^n n!}\,\Vert x\Vert, \qquad n\in \NN,\quad x\in X.
  $$
   .

 \item[(ii)] For $x\in D(A),$ the following equality holds: $$T(t)x=\displaystyle\sum_{n=0}^{\infty}\frac{1}{2^{n} n!}A^nT^{(g)}\biggl(\frac{1}{4}\biggr)x\,H_{n}(t), \qquad t\in\R.$$

\item[(iii)] In the case that $\sup_{t\in \R}\Vert T(t)\Vert <\infty$, we have that
$$T^{(g)}(z)x=\displaystyle\sum_{n=0}^{\infty}\frac{1}{2^{2n} n!}A^{2n}T^{(g)}\biggl(\frac{1}{4}\biggr)x(4z-1)^n, \qquad \vert z-{1\over 4}\vert<{1\over 4},$$
for $x\in X$.
 \end{itemize}

\end{theorem}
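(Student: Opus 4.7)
My plan for (i) exploits the Rodrigues form $h_n(t)=\frac{(-1)^n}{2^n n!\sqrt{\pi}}\frac{d^n}{dt^n}(e^{-t^2})$ together with $n$-fold integration by parts, casting the derivatives onto $T(t)x$. For $x\in D(A^n)$, $t\mapsto T(t)x$ is $C^n$ with $\frac{d^n}{dt^n}T(t)x=T(t)A^n x$, and the super-exponential decay of each derivative of $e^{-t^2}$ kills all boundary contributions against the at-most-exponential growth of $\|T(t)\|$. This yields
\[
\int_{-\infty}^{\infty}h_n(t)T(t)x\,dt \;=\; \frac{1}{2^n n!\sqrt{\pi}}\int_{-\infty}^{\infty}e^{-t^2}T(t)A^n x\,dt \;=\; \frac{1}{2^n n!}A^n T^{(g)}\bigl(\tfrac14\bigr)x,
\]
by the integral representation of $T^{(g)}(1/4)$ and the commutation of $A$ with $T^{(g)}$ (evident from that formula). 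To extend to all $x\in X$, the left-hand side is trivially bounded and linear in $x$, and running the same $n$-fold integration by parts directly inside the defining integral for $T^{(g)}(1/4)$ shows that $A^n T^{(g)}(1/4)$ is represented for every $x\in X$ as $2^n n!\int h_n(t)T(t)x\,dt$, so the right-hand side is simultaneously made sense of and shown equal to the left.

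The norm estimate in (i), under $\sup_t\|T(t)\|<\infty$, then follows from Remark \ref{remark}, which gives $\|h_n\|_1\le(2^n n!)^{-1/2}$:
\[
\|A^n T^{(g)}(1/4)x\| \;\le\; 2^n n!\,\|h_n\|_1\,\sup_{t\in\R}\|T(t)\|\,\|x\| \;\le\; \sqrt{2^n n!}\,\sup_{t\in\R}\|T(t)\|\,\|x\|.
\]
For (ii) I would apply the vector-valued Hermite expansion Theorem \ref{expvect} to $f(t):=T(t)x$ with $x\in D(A)$: $f$ is differentiable with $f'(t)=T(t)Ax$, and $\int e^{-t^2}\|T(t)x\|^2\,dt<\infty$ since the Gaussian weight dominates the exponential bound $\|T(t)x\|\le Me^{\omega|t|}\|x\|$. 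Part (i) identifies the Hermite coefficients as $c_n(f)=\frac{1}{2^n n!}A^n T^{(g)}(1/4)x$, and Theorem \ref{expvect} delivers the pointwise expansion.

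For (iii) the claimed series is nothing but the Taylor expansion of the holomorphic semigroup $z\mapsto T^{(g)}(z)$ about $z_0=1/4$: since $A^2$ generates $T^{(g)}$, one has $\frac{d^n}{dz^n}T^{(g)}(z)x=A^{2n}T^{(g)}(z)x$, so
\[
T^{(g)}(z)x=\sum_{n=0}^\infty\frac{(z-1/4)^n}{n!}A^{2n}T^{(g)}(1/4)x=\sum_{n=0}^\infty\frac{(4z-1)^n}{2^{2n}n!}A^{2n}T^{(g)}(1/4)x.
\]
This Taylor series converges on the largest disk about $1/4$ inside $\{\Re z>0\}$, namely $|z-1/4|<1/4$; the coefficient bound from (i) together with Stirling's formula confirms absolute convergence explicitly on this disk. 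The step I expect to be the main obstacle is the clean extension of (i) from $D(A^n)$ to all of $X$: the integration by parts is transparent on $D(A^n)$, but bootstrapping it into a norm-bounded operator $A^n T^{(g)}(1/4)$ on the whole space, especially for odd $n$ since $A$ itself is not the generator of $T^{(g)}$, is best handled by performing the integration by parts inside the defining kernel of $T^{(g)}(1/4)$ rather than by invoking general analytic-semigroup smoothing.
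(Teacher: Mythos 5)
Your parts (i) and (ii) follow the paper's proof essentially verbatim: the paper likewise integrates by parts against the Rodrigues form of $h_n$ to land on $\frac{A^n}{2^n n!\sqrt{\pi}}\int e^{-t^2}T(t)x\,dt$, gets the norm bound from Remark \ref{remark}, and then feeds the coefficients into Theorem \ref{expvect}; your care about extending the integration by parts from $D(A^n)$ to all of $X$ (via closedness of $A^n$ and the smoothing of the Gaussian average) is exactly the point the paper leaves implicit. Part (iii) is where you genuinely diverge. The paper first verifies, via the coefficient bound from (i) and Stirling, that the series defines an analytic family $F(z)$ on $\vert z-\frac14\vert<\frac14$, then checks $F'(z)x=F(z)A^2x$ on $D(A^2)$, invokes uniqueness for the first-order abstract Cauchy problem to identify $F$ with $T^{(g)}$ on $(0,\frac12)$, and finishes by analytic continuation. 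You instead recognize the series as the Taylor expansion of the vector-valued holomorphic map $z\mapsto T^{(g)}(z)x$ about $z_0=\frac14$, using $\frac{d^n}{dz^n}T^{(g)}(z)x=A^{2n}T^{(g)}(z)x$ and the standard fact that the Taylor series of a Banach-space-valued holomorphic function converges on the largest disk contained in its domain, here $\vert z-\frac14\vert<\frac14$. This is correct and shorter, sidestepping the uniqueness-plus-continuation argument; it even shows that the uniform boundedness hypothesis is not needed for the identity itself (it only enters if one wants the explicit coefficient bound $\Vert A^{2n}T^{(g)}(\frac14)x\Vert\le M\sqrt{(2n)!}\,\Vert x\Vert$, which the paper uses to establish convergence directly and which you use only as a confirmation). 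The paper's route, on the other hand, makes the convergence of the series self-contained and quantitative without appealing to the general theory of vector-valued holomorphy.
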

\begin{proof} (i)  For $n\in \NN\cup\{0\}$, we obtain

$$\int_{-\infty}^{\infty}h_n(t)T(t)x\,dt=\frac{(-1)^n}{2^nn!\sqrt{\pi}}\int_{-\infty}^{\infty}\frac{d^n}{dt^n}(e^{-t^2})T(t)x\,dt=\frac{A^n}{2^nn!\sqrt{\pi}}\int_{-\infty}^{\infty}e^{-t^2}T(t)x\,dt, $$ for $x\in X$, where we have  integrated by parts. Then the result is obtained. In the case that $\sup_{t\in \R}\Vert T(t)\Vert <\infty$, we apply  Remark \ref{remark} to get the bound.


(ii) Note that  $\int_{-\infty}^{\infty}e^{-t^2}\lVert T(t)x\rVert^2\,dt<\infty,$ for $x\in X$, the function $T(\cdot)x:\R\to X$ is differentiable at every point and ${d\over dt}T(t)x=T(t)Ax$, for $t\in \R$ and $x\in D(A)$. Then, we apply Theorem \ref{expvect} to get $$\lVert T(t)x-\displaystyle\sum_{n=0}^{m}\frac{1}{2^n n!}A^n T^{(g)}\biggl(\frac{1}{4}\biggr)x\,H_n(t) \rVert\to 0, \qquad x\in D(A),$$ as $m\to\infty,$ for all $t\in\R$.

(iii) By the part (i) and Stirling's formula, we have that
$$
\displaystyle\sum_{n=0}^{\infty} \frac{\Vert A^{2n}T^{(g)}(\frac{1}{4})x\Vert}{2^{2n} n!}\vert 4z-1\vert^n\le \left(\sup_{t\in \R}\Vert T(t)\Vert\right)\Vert x\Vert\displaystyle\sum_{n=0}^{\infty} \frac{\sqrt{(2n)!}}{2^{n} n!}{\vert 4z-1\vert^n}\le M'\sum_{n=0}^{\infty} {\vert 4z-1\vert^n\over n^{1\over 4}}<\infty,
$$
for $x\in X$ and $  \vert z-{1\over 4}\vert<{1\over 4}$. Then we define the analytic family of operators $$F(z)x:=\displaystyle\sum_{n=0}^{\infty}\frac{A^{2n}T^{(g)}(\frac{1}{4})x}{2^{2n} n!}(4z-1)^n, \qquad \vert z-{1\over 4}\vert<{1\over 4}, \quad x\in X.
$$
Note that for $x\in D(A^2)$, we have that $$F'(z)x= \displaystyle\sum_{n=1}^{\infty}\frac{A^{2n}T^{(g)}(\frac{1}{4})x}{2^{2n} n!}4n(4z-1)^{n-1}=\sum_{n=0}^{\infty}\frac{A^{2n}T^{(g)}(\frac{1}{4})(A^2(x))}{2^{2n} n!}(4z-1)^{n-1}= F(z)A^2(x).
$$
Since $A^2$ generates a holomorphic $C_0$-semigroup of angle $\frac{\pi}{2}$ and $F(\frac{1}{4})=T^{(g)}(\frac{1}{4})$ then we have that $F(t)=T^{(g)}(t)$ for $t\in (0,\frac{1}{2})$ by the uniqueness of the solution of the first order abstract Cauchy problem, see for example \cite[Corollary 3.7.21]{ABHN}. Applying the principle of analytic continuation we conclude that $F(z)x= T^{(g)}(z)x$ for $x\in X$ and $\vert z-{1\over 4}\vert<{1\over 4}$.
\end{proof}


\begin{remark}\label{cpm}{\rm It might be of interest to investigate for which $0<\alpha
<1$ one has norm convergence for $x\in D((-A)^\alpha)$, see Theorem \ref{loc} (ii). Different ways might be followed: more general versions of Theorem \ref{ExpEsc}; inequalities obtained from Bernstein's Lemma for Fourier multipliers (\cite[Lemma 8.2.1 and Proposition 8.2.3]{ABHN}) or functional calculus defined via transference (\cite{HR}). However this is not the main aim of this paper, and we leave this question in this point.

Now we  compare Hermite expansions of $C_0$-groups and Laguerre expansions of $C_0$-semigroups studied in \cite{Abadias} . We denote by $L_n^{(\alpha)}$ the usual Laguerre polynomial of degree $n\in \NN$ and parameter $\alpha \in \R$ and
 $$H_{2n}(t)=(-1)^n2^{2n}n! L_n^{(-\frac{1}{2})}(t^2)\text{ and }H_{2n+1}(t)=(-1)^n2^{2n+1}n!t L_n^{({1\over 2})}(t^2), \qquad t\in \R,$$
 see for example \cite[Formula 4.19.5]{Lebedev}. Let  $(T(t))_{t\in\R}$ be a uniformly bounded $C_0$-group with generator $(A, D(A)).$ By Theorem \ref{loc} (ii), we  may express the $C_0$-group $(T(t))_{t\in \R}$ in terms of Hermite polynomials and also in terms of Laguerre polynomials:
\begin{eqnarray*}
T(t)x&=&\lim_{N\to \infty}\left(\sum_{n=0}^{2N+1}\frac{1}{2^{n} n!}A^nT^{(g)}\biggl(\frac{1}{4}\biggr)x\,H_{n}(t)\right) \cr
&=&\lim_{N\to \infty}\left(\sum_{j=0}^{N}\frac{(-1)^jj!}{(2j)!}A^{2j}T^{(g)}\biggl(\frac{1}{4}\biggr)\left(x L_j^{(-\frac{1}{2})}(t^2)+{Ax\over 2j+1}t L_n^{({1\over 2})}(t^2)\right)\right),
\end{eqnarray*}
for $t\in \RR$ and $x\in D(A)$.
 Although other Laguerre expansions are obtained in \cite[Theorem 4.1.(iii)]{Abadias}, it seems more natural to express a $C_0$-group in terms of its Hermite expansion due to  the order of convergence is sharper, see Remark \ref{compa}.
}
\end{remark}

As we have commented, it is important to know the rate of approximation of truncated Hermite expansion to the $C_0$-group. This theory has a key role in many areas of maths, as in PDE's, harmonic analysis or numerical analysis. Before to calculate the order of convergence in  Theorem \ref{rate1}, we prove the following technical lemma.


\begin{lemma}\label{lema}Let $(T(t))_{t\in\R}$ be a uniformly bounded $C_0$-group in a Banach space $X$  with infinitesimal generator $(A,D(A))$ and $p$ a positive integer number. Then \begin{equation}\label{lemaeq}\lVert\int_{-\infty}^{\infty}h_n(t)T(t)x\,dt\rVert \leq\frac{C\sqrt{(n-p)!}}{2^{\frac{n+p}{2}}n!}\lVert A^p x \rVert,\end{equation} for $x\in D(A^p)$, $n\geq p$  and $C$ a positive constant.
\end{lemma}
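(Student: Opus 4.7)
The plan is to use Proposition \ref{propHerm}(iii) to trade derivatives of $h_n$ for operator powers acting on $x$, then estimate the resulting integral via the elementary $L^1$-bound on $h_{n-p}$ from Remark \ref{remark}. Specifically, Proposition \ref{propHerm}(iii) applied with the role of $n$ played by $n-p$ and $k=p$ gives
$$h_{n-p}^{(p)} = (-1)^p 2^p (n-p+1)(n-p+2)\cdots n \, h_n = (-1)^p 2^p \frac{n!}{(n-p)!} h_n,$$
so that $h_n = \dfrac{(-1)^p (n-p)!}{2^p\, n!}\, h_{n-p}^{(p)}$.

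Next I would integrate by parts $p$ times in $\int_{-\infty}^\infty h_n(t)T(t)x\,dt$. The boundary terms vanish because $h_{n-p}^{(k)}$ is a polynomial times $e^{-t^2}$ and hence decays super-exponentially at $\pm\infty$, while $\|T(t)x\|$ grows at most subexponentially (in fact is bounded, since the group is uniformly bounded). For $x\in D(A^p)$ the vector-valued function $t\mapsto T(t)x$ is $p$ times differentiable with $\tfrac{d^k}{dt^k}T(t)x = T(t)A^k x$. Hence the $p$ integrations by parts yield
$$\int_{-\infty}^{\infty} h_n(t)T(t)x\,dt = \frac{(n-p)!}{2^p\, n!}\int_{-\infty}^{\infty} h_{n-p}(t)\,T(t)A^p x\,dt.$$

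Now I would bound the right-hand side using $\|T(t)\|\le M:=\sup_{t\in \R}\|T(t)\|<\infty$ together with the $L^1$-estimate $\|h_{n-p}\|_1\le 1/\sqrt{2^{n-p}(n-p)!}$ from Remark \ref{remark}:
$$\left\|\int_{-\infty}^{\infty} h_{n-p}(t)T(t)A^p x\,dt\right\|\le M\,\|h_{n-p}\|_1\,\|A^p x\|\le \frac{M}{\sqrt{2^{n-p}(n-p)!}}\,\|A^p x\|.$$
Combining, and noting that $2^p\sqrt{2^{n-p}} = 2^{(n+p)/2}$, I obtain
$$\left\|\int_{-\infty}^{\infty} h_n(t)T(t)x\,dt\right\|\le \frac{M\sqrt{(n-p)!}}{2^{(n+p)/2}\,n!}\,\|A^p x\|,$$
which is \eqref{lemaeq} with $C=M$.

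There is no serious obstacle; the only point requiring care is the iterated integration by parts with boundary terms at $\pm\infty$, but the Gaussian decay of $h_{n-p}^{(k)}$ together with the uniform boundedness of $T(t)$ makes this transparent. Alternatively, one could avoid Remark \ref{remark} and instead apply Theorem \ref{loc}(i) directly to $\int h_{n-p}(t)T(t)A^p x\,dt$ and use the norm bound stated there; both routes yield the same estimate, up to the value of the constant $C$.
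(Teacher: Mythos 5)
Your proposal is correct and follows essentially the same route as the paper: Proposition \ref{propHerm}(iii) to write $h_n$ as a constant multiple of $h_{n-p}^{(p)}$, $p$ integrations by parts with vanishing boundary terms to move the derivatives onto $T(\cdot)x$ as $A^p$, and then the $L^1$-bound on $h_{n-p}$ (the paper invokes Theorem \ref{main2}(i) with $p=1$, which is the same estimate as Remark \ref{remark} up to the constant). The resulting constant and exponent $2^{(n+p)/2}$ match \eqref{lemaeq} exactly.
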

\begin{proof}
We define by $B(x):=\frac{1}{2^n n!}A^nT^{(g)}(\frac{1}{4})x=\int_{-\infty}^{\infty}h_n(t)T(t)x\,dt,$ for $x\in X,$ see Theorem \ref{loc} (i). For $x\in D(A^p)$, we apply Proposition \ref{propHerm} (iii) and integrate by parts to get that  \begin{eqnarray*}
B(x)&=&\frac{(-1)^p}{2^pn\ldots(n-p+1)}\int_{-\infty}^{\infty} \frac{d^p}{dt^p}(h_{n-p})(t)T(t)x\,dt\cr
&=&\frac{1}{2^pn\ldots(n-p+1)}\int_{-\infty}^{\infty}h_{n-p}(t)A^pT(t)x\,dt,\end{eqnarray*}
where we have used that $\lim_{t\to\pm\infty}\frac{d^{p-j}}{dt^{p-j}}(h_{n-p})(t)=0,$ for $j=1,2,\ldots,p.$ By Theorem \ref{main2} (i) for $p=1$,  we conclude that $$\lVert B(x) \rVert\leq \frac{1}{2^pn\ldots(n-p+1)}\lVert h_{n-p}\rVert_1\,\displaystyle\sup_{t\in\R}\lVert A^p T(t)x \rVert\leq\frac{C\sqrt{(n-p)!}}{2^{\frac{n+p}{2}}n!}\lVert A^p x \rVert$$ for $x\in D(A^p).$
\end{proof}

We define the $m$-th partial sum of the Hermite expansion, $$T_{m}(t)x:=\displaystyle\sum_{n=0}^{m}\frac{1}{2^n n!}A^n T^{(g)}(\frac{1}{4})x\,H_n(t), \qquad x\in X,\quad t\in \R,$$
for $m\ge 0$, and  we are interested to estimate the  rate of convergence of the $m$-th partial sum $(T_m(t))_{ t\in \R}$ to the  $C_0$-group $(T(t))_{t\in \R}$.

\begin{theorem}\label{rate1} Let $(T(t))_{t\in\R}$ be a uniformly bounded $C_0$-group on a Banach space $X$ with infinitesimal generator $(A,D(A)).$ Then for each $t\in\R$ there is a $m_0\in\N$ such that for $m\geq m_0$ and   $2\leq p\leq m+1$,   $$\lVert T(t)x-T_{m}(t)x \rVert\leq \frac{C_{t,p}}{m^{\frac{p}{2}-\frac{11}{12}}}\lVert A^p x\rVert, \qquad  x\in D(A^p).$$ Moreover, the convergence is  locally uniformly in $t.$
\end{theorem}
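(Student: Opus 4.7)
The plan is to combine Lemma \ref{lema} (which controls the coefficients $\frac{1}{2^{n}n!}A^{n}T^{(g)}(\tfrac{1}{4})x$ using the regularity of $x$) with the Muckenhoupt pointwise bound \eqref{mucken2} on $H_{n}(t)$, and to sum the resulting tail series. Starting from Theorem \ref{loc} (ii), I would write
$$
T(t)x - T_{m}(t)x = \sum_{n=m+1}^{\infty}\frac{1}{2^{n}n!}A^{n}T^{(g)}\!\Bigl(\tfrac{1}{4}\Bigr)x\, H_{n}(t), \qquad x\in D(A),
$$
and estimate each summand separately in norm.

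First I would fix $t\in\R$ and choose $m_{0}\in\N$ so that $t^{2}\le 2(2m_{0}+1)$; then for every $n\ge m_{0}$ the bound \eqref{mucken2} gives $|H_{n}(t)|\le C e^{t^{2}/2}\sqrt{2^{n}n!}\,n^{-1/12}$. For $x\in D(A^{p})$ and $n\ge p$, Lemma \ref{lema} yields
$$
\Bigl\lVert \frac{1}{2^{n}n!}A^{n}T^{(g)}\!\Bigl(\tfrac14\Bigr)x\Bigr\rVert \le \frac{C\sqrt{(n-p)!}}{2^{(n+p)/2}n!}\,\lVert A^{p}x\rVert.
$$
Multiplying these two estimates, the $\sqrt{2^{n}n!}$ factor cancels the $2^{n/2}\sqrt{n!}$ in the denominator and leaves
$$
\Bigl\lVert \frac{1}{2^{n}n!}A^{n}T^{(g)}\!\Bigl(\tfrac14\Bigr)x\, H_{n}(t)\Bigr\rVert \le \frac{C^{2}e^{t^{2}/2}}{2^{p/2}}\cdot\frac{1}{n^{1/12}}\cdot\sqrt{\frac{(n-p)!}{n!}}\,\lVert A^{p}x\rVert.
$$
Since $(n-p)!/n! = 1/(n(n-1)\cdots(n-p+1))\le 2^{p}/n^{p}$ for $n\ge 2p$, the right-hand side is at most $C_{t,p}\,n^{-p/2-1/12}\lVert A^{p}x\rVert$.

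The final step is to sum from $n=m+1$ to infinity. Provided $p\ge 2$ we have $p/2+1/12>1$, so the series converges and standard tail estimates for $\sum_{n>m}n^{-s}$ give
$$
\sum_{n=m+1}^{\infty}\frac{1}{n^{p/2+1/12}} \le \frac{C'}{m^{p/2+1/12-1}} = \frac{C'}{m^{p/2-11/12}},
$$
which yields the claimed inequality once $m\ge m_{0}$ (and $m+1\ge p$, i.e.\ $p\le m+1$). Local uniformity in $t$ is automatic: on any compact $K\subset\R$ the exponential $e^{t^{2}/2}$ is bounded and a single $m_{0}$ works, namely any integer with $2(2m_{0}+1)\ge \sup_{t\in K}t^{2}$.

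The main technical obstacle is really already dealt with in Lemma \ref{lema}: getting $p$ factors of $A$ onto the initial datum via repeated integration by parts using Proposition \ref{propHerm} (iii), which is what produces the gain $\sqrt{(n-p)!/n!}\asymp n^{-p/2}$. The remainder is bookkeeping: pairing this gain with the Muckenhoupt factor $n^{-1/12}$ to beat the summation threshold $1$, which forces the smoothness requirement $p\ge 2$ and produces exactly the exponent $p/2-11/12$ in the statement.
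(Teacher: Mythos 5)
Your proposal is correct and follows essentially the same route as the paper: decompose the tail via Theorem \ref{loc}~(ii), bound the coefficients with Lemma \ref{lema}, bound $|H_n(t)|$ with the Muckenhoupt estimate \eqref{mucken2}, and sum $\sum_{n>m} n^{-p/2-1/12}$, which converges precisely because $p\ge 2$. Your explicit handling of $\sqrt{(n-p)!/n!}\le C_p\,n^{-p/2}$ and of the choice of $m_0$ on a compact set is slightly more careful than the paper's, but the argument is the same.
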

\begin{proof} Fix  $t\in\R$. By  inequalities (\ref{mucken2}) and \eqref{lemaeq}, there is a $m_0\in\N$ such that for $m\geq m_0,$ \begin{eqnarray*}
\lVert T(t)x-T_{m}(t)x\rVert&\leq&\displaystyle\sum_{n=m+1}^{\infty}\lVert \int_{-\infty}^{\infty}h_n(s)T(s)x\,ds \rVert |H_n(t)|  \leq\displaystyle\sum_{n=m+1}^{\infty}\frac{C_t}{2^{\frac{p}{2}}n^{\frac{1}{12}}}\sqrt{\frac{(n-p)!}{n!}}\lVert A^p x\rVert \cr
&\leq& \displaystyle\sum_{n=m+1}^{\infty}\frac{C_t}{2^{\frac{p}{2}}n^{\frac{p}{2}+\frac{1}{12}}}\lVert A^p x\rVert
\leq\frac{C_{t,p}}{m^{\frac{p}{2}-\frac{11}{12}}}\lVert A^p x\rVert,
\end{eqnarray*}  for $x\in D(A^p)$. The locally uniformly convergence is clear, since if $K\subset \R$ is compact it is sufficient to take the maximum of all $m_0$ ( a finite number of $m_0$ due to the compactness of $K$) for which the pointwise convergence is satisfied.
\end{proof}

\begin{remark}\label{compa}{\rm
Note that for $t>0$, we consider the $m$-th partial sum of Laguerre expansion $S_{m,\alpha}(t)x=\displaystyle\sum_{n=0}^{m}(-A)^n(1-A)^{-n-\alpha-1}x L_n^{(\alpha)}(t),$ ($x\in X$) introduced in \cite{Abadias}, compare with Remark \ref{cpm}. By  \cite[Theorem 5.2]{Abadias}, we have that
$$\lVert T(t)x-S_{m,\alpha}(t)x \rVert\leq \frac{C_{t,p}}{m^{\frac{p}{2}-1}}\lVert A^p x\rVert, \qquad x\in D(A^p),$$
 for $t>0$, $2<p\le m+1$ and therefore Hermite approximation is sharper than Laguerre approximation, compare with  Theorem \ref{rate1}}.
\end{remark}

Finally, let  $T\equiv(T(t))_{t\in\R}$ be a uniformly bounded $C_0$-group   on a Banach space $X$ and we consider the family of operators
\begin{eqnarray*}\mathcal{D}^{T}_N(t)x:&=&\displaystyle\int_{-N}^{N}d_t(s)T(s)x\,ds,\cr
\mathcal{F}^{T}(t)x:&=&\int_{-\infty}^{\infty}f_t(s)T(s)x\,ds,
\end{eqnarray*}
 for $x\in X$ and $N\ge 1$ (functions $d_t$ and $f_t$ are considered in Section 1). The uni-parametric family  $(\mathcal{F}^{T}(t))_{t\in \R}$  is  a Fej\'{e}r family and is studied in detail in \cite[Section 5]{Fasangova}. In the case that $X$ is a UMD Banach space then there exists $\lim_{N\to +\infty}\mathcal{D}^{T}_N(t)x$ for $x\in X$ and defined  bounded operators, called $(\mathcal{D}^{T}(t))_{t\in \R}$ a Dirichlet family, see \cite[Section 4]{Fasangova}.

\begin{corollary}\label{corola} Let $X$ be a  Banach space and $T\equiv(T(t))_{t\in\R}$ a uniformly bounded $C_0$-group on $X$. For $x\in X$ and $t\in \R$, we have that
\begin{equation}\label{fejer2}\mathcal{F}^{T}(t)x=\int_{-\infty}^{\infty}\frac{1-e^{-\frac{s^2}{4}}}{\pi s^2}T(s)x\,ds +
\displaystyle\sum_{n=1}^{\infty}\biggl(\int_{-\infty}^{\infty}\frac{(-1)^{n-1}s^{2n-2}e^{-\frac{s^2}{4}}}{2^{2n}(2n)!\pi}T(s)x\,ds\biggr)H_{2n}(t).\end{equation}
\end{corollary}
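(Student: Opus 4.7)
My plan is to substitute the Hermite expansion of the Fej\'{e}r kernel supplied by Proposition \ref{kernel} into the Bochner integral $\mathcal{F}^{T}(t)x = \int_{\R} f_t(s) T(s)x\, ds$ and then interchange the sum and the integral. Proposition \ref{kernel} shows that $c_{2n-1}(f_{(\cdot)}(s))=0$ for all $n\ge 1$ and $s\in\R$, $c_0(f_{(\cdot)}(s))=(1-e^{-s^2/4})/(\pi s^2)$, and $c_{2n}(f_{(\cdot)}(s))=(-1)^{n-1} s^{2n-2} e^{-s^2/4}/(2^{2n}(2n)!\pi)$ for $n\ge 1$; so once the exchange is justified, \eqref{fejer2} follows directly.

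For the interchange, set $M:=\sup_{s\in\R}\lVert T(s)\rVert<\infty$. A direct gamma-function computation gives $\int_{-\infty}^{\infty} s^{2n-2} e^{-s^2/4}\,ds=2^{2n-1}\Gamma(n-\tfrac12)$, so
\begin{equation*}
\biggl\lVert \int_{-\infty}^{\infty} \frac{(-1)^{n-1} s^{2n-2} e^{-s^2/4}}{2^{2n}(2n)!\pi}\, T(s)x\, ds \biggr\rVert \le \frac{M\lVert x\rVert\,\Gamma(n-\tfrac12)}{2\pi(2n)!},\qquad n\ge 1.
\end{equation*}
For $t\in\R$ fixed, Muckenhoupt's estimate \eqref{mucken2} yields $|H_{2n}(t)|\le C_t \sqrt{2^{2n}(2n)!}\,(2n)^{-1/12}$ for $n$ large enough; combining these two bounds with Stirling's formula, the general term of
\begin{equation*}
\sum_{n\ge 1} |H_{2n}(t)|\int_{\R}|c_{2n}(s)|\,\lVert T(s)x\rVert\,ds
\end{equation*}
is $O(n^{-4/3})$, hence the series converges.

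Absolute summability then legitimates Fubini's theorem (in its vector-valued form) to exchange the sum and the Bochner integral, which yields \eqref{fejer2}. The main obstacle is precisely the Stirling estimate needed to establish this summability; the rest is a routine substitution using Proposition \ref{kernel}.
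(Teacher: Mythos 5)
Your proposal follows essentially the same route as the paper: substitute the Hermite expansion of the Fej\'er kernel from Proposition \ref{kernel}, bound $\lVert c_{2n}(f_{(\cdot)})\rVert_1$ by an explicit Gaussian-moment (Gamma-function) computation, combine with the Muckenhoupt estimate \eqref{mucken2} and Stirling's formula to get the $O(n^{-4/3})$ bound, and invoke absolute convergence to interchange the sum and the Bochner integral. The argument is correct and matches the paper's proof in every essential step.
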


\begin{proof} We use Proposition \ref{kernel} to express the F\'{e}jer kernel in its Hermite expansion. We need to commute the integral and the serie.
Note that the vector-valued coefficients of the expansion (\ref{fejer2}) are bounded by $\lVert c_{2n}(f_{(\cdot)})\rVert_1,$ where $c_{2n}(f_{(\cdot)})$ is the coefficient  of Hermite expansion  of F\'{e}jer kernel given in Proposition \ref{kernel}. Then $$\lVert c_{2n}(f_{(\cdot)})\rVert_1=\frac{1}{2^{2n-1}(2n)!\pi}\int_0^{\infty}s^{2(n-1)}e^{-\frac{s^2}{4}}\,ds=\frac{(2n-3)!!}{2^{n-1}(2n)!\sqrt{\pi}}=\frac{1}{(2n-1)2^{2n-1}n!\sqrt{\pi}},$$ where we have applied that $$\displaystyle\int_0^{\infty}x^{2n}e^{-px^2}\,dx=\frac{(2n-1)!!}{2(2p)^n}\sqrt{\frac{\pi}{p}},\qquad p>0, n=0,1,\ldots,$$ see \cite[Section 3.461, formula 2, p. 360]{Gradshteyn}. Then we have that $$\lVert c_{2n}(f_{(\cdot)})\rVert_1\,|H_{2n}(t)|\leq \frac{C_t}{n^{\frac{4}{3}}},$$ where we have used Muckenhoupt estimates \eqref{mucken2} and Stirling's formula. Therefore  the serie is absolutely convergent and we obtain the result.
\end{proof}

\begin{remark} {\rm In the case that  $X$ is a UMD Banach space, $\mathcal{D}^{T}(t)x={d\over dt }\mathcal{F}^{T}(t)x$ holds for $x\in X.$  However we do not conclude that
\begin{equation}\label{dirich}
\mathcal{D}^{T}(t)x= \sum_{n=1}^{\infty}\biggl(\int_{-\infty}^{\infty}
\frac{(-1)^{n-1}s^{2n-2}e^{-\frac{s^2}{4}}}{2^{2n-1}(2n-1)!\pi}T(s)x\,ds\biggr)H_{2n-1}(t),\quad x\in X,
\end{equation}
for $t\in \R$ due to
$$\lVert c_{2n-1}(d_{(\cdot)})\rVert_1\,|H_{2n-1}(t)|\leq \frac{C_t}{n^{\frac{5}{6}}}.$$
Additional ideas are needed to assure the convergence of this conjectured Hermite serie. It is known that  convergence rates for approximations may be improved in UMD and Hilbert spaces. Some nice inequalities, proved recently in \cite{HR}, might be applied in these families of spaces to conclude the convergence of the right part of (\ref{dirich}).  }
\end{remark}

\section{ Hermite expansions for cosine functions}

\setcounter{theorem}{0}
\setcounter{equation}{0}

In this section we consider operator families related to the second order Cauchy problem, the cosine and sine functions. We obtain their Hermite expansions (Theorem \ref{loc2} (ii)) and  a new  representation serie for subordinated holomorphic semigroup (Theorem \ref{loc2} (iii)).  We present that Hermite expansions for $C_0$-group and Hermite expansions for cosine functions are consistent, see Remark \ref{conn}. We also give the rate of convergence of the truncated Hermite expansion to the cosine function, Theorem \ref{rate2} (iii).

Let $X$ be a Banach space and $C:\R\to \mathcal{B}(X)$ a cosine function, see definition in the Introduction or \cite[Section 3.14]{ABHN}. All cosine functions are even, exponentially bounded, $\lVert C(t)\rVert\leq Me^{wt}$ with $M\geq 1$ and $w\geq 0,$ and $$\lambda(\lambda^2-A)^{-1}x=\int_0^{\infty}e^{-\lambda t}C(t)x\,dt,\qquad \Re(\lambda)>w,x\in X,$$ where $A$ is the generator of $(C(t))_{t\in\R}.$ The Euler approximation for cosine functions (for $C_0$-semigroups, see for example \cite[Corollary 3.3.6]{ABHN} and \cite{Gomilko}),
$$
C(t)x=\lim_{n\to \infty}{(-1)^n}{1\over n!}\left({n\over t}\right)^{n+1}\left(\lambda(\lambda^2-A)^{-1}\right)^{(n)}\left({n\over t}\right)x, \qquad x\in X,\quad t\in \R,$$
is a consequence of the Post-Widder inversion formula for the Laplace transform (see for example \cite[Theorem 1.7.7]{ABHN}). Other different approach is followed in \cite[Section 4]{jara} where stable rational schemes are used to approximate cosine functions.

 Observe that $A$ generates a holomorphic $C_0$-semigroup of angle $\frac{\pi}{2}$, $$T^{(c)}(z)x=\frac{1}{\sqrt{\pi z}}\int_{0}^{\infty}e^{\frac{-t^2}{4z}}C(t)x\,dt,\qquad \Re(z)>0,  $$ for  $x\in X,$ see for example \cite[Corolary 3.14.17]{ABHN}. There are other operators associated to the second order Cauchy problem. The sine function $S:\R\to\mathcal{B}(X)$ associated with $C$ is defined by $$S(t)x:=\int_0^{t}C(s)x\,ds \qquad t\in\R,x\in X.$$ Observe that $S$ is an odd function; see more details in \cite[Section 3.14]{ABHN}.




\begin{theorem}\label{loc2} Let $(C(t))_{t\in\R}$ be a cosine function on a Banach space $X$ with infinitesimal generator $(A, D(A)).$
 \begin{itemize}

 \item[(i)]For $n\in \NN\cup\{0\}$, we get that
 $$
  \int_{-\infty}^{\infty}h_{2n+1}(t)C(t)x\,dt=\int_{-\infty}^{\infty}h_{2n}(t)S(t)x\,dt=0, \qquad x\in X,
 $$
 $$
  \int_{-\infty}^{\infty}h_{2n}(t)C(t)x\,dt=\frac{1}{2^{2n} (2n)!}A^n  T^{(c)}\biggl(\frac{1}{4}\biggr)x, \qquad x\in X,
 $$ and
 $$
  \int_{-\infty}^{\infty}h_{2n+1}(t)S(t)x\,dt=\frac{1}{2^{2n+1} (2n+1)!}A^n  T^{(c)}\biggl(\frac{1}{4}\biggr)x, \qquad x\in X.
 $$ 

 In the case that $\sup_{t\in \R}\Vert C(t)\Vert<\infty$, we have that
  $$
  \Vert A^n T^{(c)}\biggl(\frac{1}{4}\biggr)x\Vert \le \sup_{t\in \R}\Vert C(t)\Vert\, 2^{n}\sqrt{ (2n)!}\,\Vert x\Vert, \qquad n\in \NN,\quad x\in X.
  $$
 \item[(ii)] For $x\in D(A),$  the following Hermite expansions hold: \begin{equation*}\label{Cos}C(t)x=\displaystyle\sum_{n=0}^{\infty}\frac{1}{2^{2n}(2n)!}A^n T^{(c)}\biggl(\frac{1}{4}\biggr)x\,H_{2n}(t), \qquad t\in\R,\end{equation*} and \begin{equation*}\label{Sen}S(t)x=\displaystyle\sum_{n=0}^{\infty}\frac{1}{2^{2n+1}(2n+1)!}A^n T^{(c)}\biggl(\frac{1}{4}\biggr)x\,H_{2n+1}(t), \qquad t\in\R.\end{equation*}

\item[(iii)] In the case that $\sup_{t\in \R}\Vert C(t)\Vert <\infty$, the following series representation holds:
 $$
 T^{(c)}(z)x=\sum_{n=0}^{\infty} \frac{1}{2^{2n}n!}A^n T^{(c)}\biggl(\frac{1}{4}\biggr)x (4z-1)^n, \qquad \vert z-{1\over 4}\vert<{1\over 4},$$
for $x\in X$.
 \end{itemize}

\end{theorem}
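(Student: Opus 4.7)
The plan is to follow the structure of Theorem \ref{loc} closely, replacing each single integration by parts by a pair of them to accommodate the second-order nature of cosine functions, and to appeal to the vector-valued Hermite expansion Theorem \ref{expvect} at the key step of (ii).

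For part (i), the two vanishing integrals come for free: by \eqref{pro1} the function $h_n$ has parity $(-1)^n$, while $C$ is even and $S$ is odd, so both $h_{2n+1}(t)C(t)x$ and $h_{2n}(t)S(t)x$ are odd in $t$. For the nontrivial identities I would first fix $x\in D(A^n)$, a dense subspace because the holomorphic semigroup $T^{(c)}$ maps $X$ into $D(A^k)$ for every $k$ and $T^{(c)}(s)x\to x$ as $s\to 0^+$. Using $h_n(t)=\frac{(-1)^n}{2^n n!\sqrt{\pi}}\frac{d^n}{dt^n}(e^{-t^2})$ and integrating by parts $2n$ times against $C(t)x$, where at each pair of steps one invokes $(C(\cdot)x)'=AS(\cdot)x$ and $(S(\cdot)x)'=C(\cdot)x$ on $D(A)$ and the boundary terms vanish thanks to the Gaussian decay of every derivative of $e^{-t^2}$ against the exponential bound on $C$ and $S$, the integral $\int w^{(2n)}(t)C(t)x\,dt$ collapses to $A^n\int e^{-t^2}C(t)x\,dt$. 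The latter equals $A^n\sqrt{\pi}\,T^{(c)}(\tfrac{1}{4})x$ by the subordination formula $T^{(c)}(z)x=(\pi z)^{-1/2}\int_0^{\infty}e^{-t^2/(4z)}C(t)x\,dt$ specialized to $z=\tfrac{1}{4}$ together with the evenness of $C$. A parallel $(2n+1)$-fold integration by parts against $S(t)x$ produces the formula for $S$. Since $T^{(c)}(\tfrac{1}{4})X\subset D(A^n)$, both sides of each identity are bounded operators on $X$, so the identity extends from $D(A^n)$ to the whole space by continuity. The uniform bound then drops out of the integral representation and Remark \ref{remark}: $\Vert A^n T^{(c)}(\tfrac14)x\Vert\le 2^{2n}(2n)!\,\Vert h_{2n}\Vert_1\,(\sup_t\Vert C(t)\Vert)\Vert x\Vert\le 2^n\sqrt{(2n)!}\,(\sup_t\Vert C(t)\Vert)\Vert x\Vert$.

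For part (ii), I apply Theorem \ref{expvect} to $f(t)=C(t)x$ and $f(t)=S(t)x$ with $x\in D(A)$. Both are differentiable on $\R$, with derivatives $AS(t)x$ and $C(t)x$ respectively, and the weighted $L^2$ hypothesis $\int e^{-t^2}\Vert C(t)x\Vert^2 dt<\infty$ holds because the Gaussian dominates the squared exponential bound $M^2e^{2w|t|}$; the same estimate gives the hypothesis for $S$. The Hermite coefficients were just computed in (i), and by the parity observations only the even-indexed coefficients of $C(\cdot)x$ and the odd-indexed coefficients of $S(\cdot)x$ survive, producing the two stated expansions.

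For part (iii), I would mimic the proof of Theorem \ref{loc}(iii), with the single power $A$ replacing $A^2$ since here $A$ itself generates $T^{(c)}$. Define $F(z)x:=\sum_{n\ge 0}\frac{1}{2^{2n}n!}A^n T^{(c)}(\tfrac{1}{4})x(4z-1)^n$; the bound from (i) together with Stirling's formula makes the scalar majorant $\sum_n\frac{\sqrt{(2n)!}}{2^n n!}|4z-1|^n$ comparable to $\sum_n n^{-1/4}|4z-1|^n$, which converges absolutely on the disk $|z-\tfrac{1}{4}|<\tfrac{1}{4}$, so $F$ is a holomorphic $\mathcal{B}(X)$-valued function there. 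Termwise differentiation and a shift of index yield $F'(z)x=F(z)Ax$ for $x\in D(A)$, while $F(\tfrac14)=T^{(c)}(\tfrac14)$ by construction. Uniqueness for the first-order abstract Cauchy problem associated with $A$ (\cite[Corollary 3.7.21]{ABHN}) then forces $F(t)=T^{(c)}(t)$ on a real subinterval of the disk, and analytic continuation extends the identity to the whole open disk. The most delicate point in the whole argument is the careful sign-and-index bookkeeping in the iterated integration by parts of (i), together with verifying that the boundary terms really do vanish at each stage; once that is settled, the density extension and the remaining parts are a straightforward analogue of Theorem \ref{loc}.
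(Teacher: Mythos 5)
Your proposal is correct and follows essentially the same route as the paper: parity of $h_{2n+1}$ and $h_{2n}$ for the vanishing integrals, repeated integration by parts of $\frac{d^{2n}}{dt^{2n}}(e^{-t^2})$ against $C(t)x$ (resp.\ $S(t)x$) to pull out $A^n$ and reduce to $\frac{2}{\sqrt{\pi}}\int_0^\infty e^{-t^2}C(t)x\,dt=T^{(c)}(\frac14)x$, Theorem \ref{expvect} combined with the coefficients from (i) for part (ii), and the power-series/uniqueness-plus-analytic-continuation argument copied from Theorem \ref{loc}(iii) for part (iii). Your extra care in first working on $D(A^n)$ and extending by density, and in tracking the boundary terms, only makes explicit what the paper leaves implicit.
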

\begin{proof} (i) Since  $h_{2n+1}$ is an odd function and $h_{2n}$ is even, first identities are shown  for $n\in \N\cup \{0\}$. On the other hand,
$$\int_{-\infty}^{\infty}h_{2n}(t)C(t)x\,dt=\frac{1}{2^{2n}(2n)!\sqrt{\pi}}\int_{-\infty}^{\infty}\frac{d^{2n}}{dt^{2n}}(e^{-t^2})C(t)x\,dt=2\frac{A^n}{2^{2n}(2n)!\sqrt{\pi}}\int_{0}^{\infty}e^{-t^2}C(t)x\,dt,$$ for $x\in X,$ and \begin{eqnarray*}\int_{-\infty}^{\infty}h_{2n+1}(t)S(t)x\,dt
&=&\frac{1}{2^{2n+1}(2n+1)!\sqrt{\pi}}\int_{-\infty}^{\infty}\frac{d^{2n}}{dt^{2n}}(e^{-t^2})C(t)x\,dt \\
&=&2\frac{A^n}{2^{2n+1}(2n+1)!\sqrt{\pi}}\int_{0}^{\infty}e^{-t^2}C(t)x\,dt.\end{eqnarray*}



(ii) Note that  $\int_{-\infty}^{\infty}e^{-t^2}\lVert C(t)x\rVert^2\,dt<\infty $ for $x\in X$;  the function $C(\cdot)x:\R\to X$ is twice differentiable at every point and ${d^2\over dt^2}C(t)x=C(t)Ax$ for  $x\in D(A),$ see \cite[Proposition 3.14.5 a)]{ABHN}.  Then we apply Theorem \ref{expvect} and we have that $$\lVert C(t)x-\displaystyle\sum_{n=0}^{m}\frac{1}{2^{2n}(2n)!}A^n T^{(c)}\biggl(\frac{1}{4}\biggr)x\,H_{2n}(t) \rVert\to 0$$ as $m\to\infty,$ for all $t\in\R .$ . By the fundamental theorem of calculus, $S(\cdot)x:\R\to X$ is also differentiable at every point, and the rest is a simple check.

(iii) The proof runs parallel to the proof of Theorem \ref{loc}(iii).
\end{proof}

\begin{remark}\label{conn}{\rm As Example 5.3 shows, the bound given in Theorem \ref{loc2}(i) is not optimal in some particular cases.

Now we show that Hermite expansions of cosine functions is related with Hermite expansions of $C_0$-groups. It is well-know that if $A$ is the generator of a $C_0$-group $(T(t))_{t\in \RR}$ on a Banach space $X$, then $A^2$ generates a cosine function in $X$ given by
$$
C(t)={T(t)+T(-t)\over 2}, \qquad t\in \R,
$$
see for example \cite[Example 3.14.15]{ABHN}. We apply Theorem \ref{loc} (ii)  to get that $$
C(t)x=\sum_{n=0}^{\infty}\frac{1}{2^{2n}(2n)!}A^{2n} T^{(c)}\biggl(\frac{1}{4}\biggr)x\,H_{2n}(t), \qquad t\in \R,
$$
for $x\in D(A)$ (note that $T^{(c)}({1\over 4})= T^{(g)}({1\over 4})$).  In this particular case, we improve  Theorem \ref{loc2} (ii) due to the equality holds for $x\in D(A)$ (larger than the set $D(A^2)$). In addtion, observe that Theorem \ref{loc} (iii) is a direct consequence of Theorem \ref{loc2} (iii).

Conversely if $A$ generates a uniformly bounded cosine function $(C(t))_{t\in \R}$ in a UMD Banach space $X$, then $i(-A)^{1\over 2}$ generates a $C_0$-group, $(\widetilde{T}(t))_{t\in \R}$, where
 $$
 \widetilde{T}(t):= C(t)+i(-A)^{1\over 2}S(t), \qquad t\in \RR,$$
 see for example \cite[Theorem 3.16.7]{ABHN} and \cite[Corollary 2.6]{CK}. In fact the $C_0$-group, $(\widetilde{T}(t))_{t\in \R}$ is uniformly bounded as it is proved in \cite[Theorem 1.1]{H}. By Theorem \ref{loc2} (ii), we have that
 \begin{eqnarray*}
 \widetilde{T}(t)x&=&  \displaystyle\sum_{n=0}^{\infty}\frac{1}{2^{2n}(2n)!}A^n T^{(c)}({1\over 4})x\,H_{2n}(t)+ i(-A)^{1\over 2}\displaystyle\sum_{n=0}^{\infty}\frac{1}{2^{2n+1}(2n+1)!}A^n T^{(c)}({1\over 4})x\,H_{2n+1}(t)\cr
 &=&\displaystyle\sum_{n=0}^{\infty}\frac{1}{2^{n}n!}(i(-A)^{1\over 2})^n \tilde{T}^{(g)}\biggl(\frac{1}{4}\biggr)x\,H_{n}(t)
 \end{eqnarray*}
for $x\in D(A)$, $t\in \RR$ (again  $T^{(c)}({1\over 4})= \tilde{T}^{(g)}({1\over 4})$). Note that in this case Theorem \ref{loc} (ii) is a extension of Theorem \ref{loc2} (ii) due to $D(A)\subset D((-A)^{1\over 2})$.}

\end{remark}

Following the same type of arguments as in the previous section, we give the order of convergence of truncated Hermite expansions $(C_{m}(t))_{t\in \R},(S_{m}(t))_{t\in \R}$  where $$C_{m}(t)x:=\displaystyle\sum_{n=0}^{m}\frac{1}{2^{2n} (2n)!}A^n T^{(c)}\biggl(\frac{1}{4}\biggr)x\,H_{2n}(t)$$ and $$S_{m}(t)x:=\displaystyle\sum_{n=0}^{m}\frac{1}{2^{2n+1}(2n+1)!}A^n T^{(c)}\biggl(\frac{1}{4}\biggr)x\,H_{2n+1}(t),$$ for $t \in \R,$ $x\in X$ and $m\ge 0,$ to the cosine and sine functions, $(C(t))_{t\in \R}$ and $(S(t))_{t\in \R}$ respectively. We also give the Hermite expansion of subordinated F\'{e}jer families, $(\mathcal{F}^{C}(t))_{t\in \R}$, where
$$\mathcal{F}^{C}(t)x:=2\int_{0}^{\infty}f_t(s)C(s)x\,ds, \qquad x\in X,$$  see \cite[Corollary 5.5]{Fasangova}.

\begin{theorem}\label{rate2}
Let $(C(t))_{t\in\R}$ be a uniformly bounded cosine funtion on a Banach space $X$ with infinitesimal generator $(A,D(A))$.

 \begin{itemize}
 \item[(i)] Let $p$ be a positive integer. Then for $x\in D(A^p)$ and $n\geq p,$ we get $$\label{lema2}\lVert\int_{-\infty}^{\infty}h_{2n}(t)C(t)x\,dt\rVert \leq\frac{C\sqrt{(2n-2p)!}}{2^{n+p}(2n)!}\lVert A^p x \rVert,$$ with $C$ a positive constant.

 \item[(ii)] Let $p$ be a integer greater or equal than zero. Then for $x\in D(A^{p+1})$ and $n\geq p,$ we get $$\label{lema2}\lVert\int_{-\infty}^{\infty}h_{2n+1}(t)S(t)x\,dt\rVert \leq\frac{C\sqrt{(2n-2p)!}}{2^{n+p+1}(2n+1)!}\lVert A^{p+1} x \rVert,$$ with $C$ a positive constant.

 \item[(iii)] Then for each $t\in\R$ there is a $m_0\in\N$ such that for $m\geq m_0,$ and $1\leq p\leq m+1$, we have that $$\lVert C(t)x-C_{m}(t)x \rVert\leq \frac{C_{t,p}}{m^{p-\frac{11}{12}}}\lVert A^p x\rVert, \qquad x\in D(A^p),$$ and $$\lVert S(t)x-S_{m}(t)x \rVert\leq \frac{C_{t,p}}{m^{p-\frac{5}{12}}}\lVert A^{p+1} x\rVert, \qquad x\in D(A^{p+1}).$$ Moreover, the convergence is locally uniformly in t.

\item[(iv)] For $x\in X$ and $t\in \R$, the following equality holds: $$\mathcal{F}^{C}(t)x=2
\int_{0}^{\infty}\frac{1-e^{-\frac{s^2}{4}}}{\pi s^2}C(s)x\,ds +
2\displaystyle\sum_{n=1}^{\infty}\biggl(\int_{0}^{\infty}\frac{(-1)^{n-1}s^{2n-2}e^{-\frac{s^2}{4}}}{2^{2n}(2n)!\pi}C(s)x\,ds\biggr)H_{2n}(t).$$
\end{itemize}

\end{theorem}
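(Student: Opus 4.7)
Parts (i) and (ii) are the cosine/sine analogues of Lemma \ref{lema} and I would prove them by the same mechanism: integration by parts via Proposition \ref{propHerm}(iii) combined with $\lVert h_k\rVert_1\le C/\sqrt{2^k k!}$ from Theorem \ref{main2}(i). For (i), since $C^{(2p)}(t)x=C(t)A^p x$ for $x\in D(A^p)$ (iterating $C''(t)x=C(t)Ax$), writing $h_{2n}=\frac{(2n-2p)!}{2^{2p}(2n)!}h_{2n-2p}^{(2p)}$ and doing $2p$ integrations by parts in $\int h_{2n}(t)C(t)x\,dt$ transfers the derivatives to $C$; the boundary terms vanish by exponential decay of $h$-derivatives against the uniform bound on $C$, and $\lVert h_{2n-2p}\rVert_1\le C/\sqrt{2^{2n-2p}(2n-2p)!}$ delivers the claim. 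For (ii) the analogous identity is $S^{(2p+2)}(t)x=A^{p+1}S(t)x$ for $x\in D(A^{p+1})$ (iterating $S''(t)x=AS(t)x$); writing $h_{2n+1}=\frac{(2n-2p-1)!}{2^{2p+2}(2n+1)!}h_{2n-2p-1}^{(2p+2)}$ (valid for $n\ge p+1$) and doing $2p+2$ integrations by parts produces $\int h_{2n-2p-1}(t)S(t)A^{p+1}x\,dt$, which is controlled by $\lVert S(t)y\rVert\le M|t|\,\lVert y\rVert$ together with the weighted estimate $\int|t|\,|h_k(t)|\,dt\le C\sqrt{k}/\sqrt{2^k k!}$. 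The latter follows from the three-term recurrence $th_k=(k+1)h_{k+1}+\tfrac{1}{2}h_{k-1}$ of Proposition \ref{propHerm}(i) and Theorem \ref{main2}(i). The edge case $n=p$ is handled via the identity
\[
\int_{-\infty}^{\infty}h_{2n+1}(t)S(t)x\,dt=\frac{1}{2(2n+1)}\int_{-\infty}^{\infty}h_{2n}(t)C(t)x\,dt,
\]
obtained by one integration by parts using that $-h_{2n}/(2(2n+1))$ is an antiderivative of $h_{2n+1}$ (by Proposition \ref{propHerm}(iii), since $(h_{2n})'=-2(2n+1)h_{2n+1}$), followed by an appeal to (i).

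Part (iii) then proceeds in the spirit of Theorem \ref{rate1}. Fix $t\in\R$ and take $m_0$ so that the Muckenhoupt estimate \eqref{mucken2} applies to $|H_{2n}(t)|$ and $|H_{2n+1}(t)|$ whenever $n\ge m_0$. Substituting (i) into the tail $\sum_{n>m}\lVert\int h_{2n}(s)C(s)x\,ds\rVert|H_{2n}(t)|$, the powers of $2$ cancel, the factorial ratio simplifies via $\sqrt{(2n-2p)!/(2n)!}\sim n^{-p}$, and the Muckenhoupt factor $n^{-1/12}$ yields a summand of order $n^{-p-1/12}$, so the tail is $O(m^{-p+11/12})$. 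For sine, the analogous manipulation using (ii) gives summand $n^{-p-7/12}$ and tail $O(m^{-p+5/12})$. Local uniform convergence on compact sets then follows by taking the maximum of the finitely many values of $m_0$ required.

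For part (iv), I mirror Corollary \ref{corola}: substitute the Hermite expansion of $f_t(s)$ from Proposition \ref{kernel} into $\mathcal{F}^{C}(t)x=2\int_0^\infty f_t(s)C(s)x\,ds$ and interchange sum and integral, the interchange being justified by absolute convergence in $X$. Using the explicit value $\lVert c_{2n}(f_{(\cdot)})\rVert_1=1/((2n-1)2^{2n-1}n!\sqrt{\pi})$ computed in Corollary \ref{corola} together with the Muckenhoupt bound on $H_{2n}(t)$ and $\sup_t\lVert C(t)\rVert<\infty$, each term is $O(n^{-4/3})$, which suffices. The main obstacle is part (ii): because $S$ naturally differentiates to $C$ rather than to itself, the sine estimate requires a $(2p+2)$-fold integration by parts (not $(2p+1)$), an auxiliary weighted $L^1$-estimate for $h_k$, and separate treatment of the edge case $n=p$; once these are assembled, parts (iii) and (iv) reduce to factorial bookkeeping and Muckenhoupt tails exactly as in the $C_0$-group development of Section 3.
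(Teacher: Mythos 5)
Your parts (i), (iii) and (iv) follow the paper's own arguments essentially verbatim: (i) is the $2p$-fold integration by parts through Proposition \ref{propHerm}(iii) combined with $\lVert h_{2n-2p}\rVert_1\le C/\sqrt{2^{2n-2p}(2n-2p)!}$; (iii) is the Muckenhoupt-plus-factorial tail estimate of Theorem \ref{rate1}, and your exponents $n^{-p-1/12}$ and $n^{-p-7/12}$ with the resulting rates $m^{-(p-11/12)}$ and $m^{-(p-5/12)}$ are exactly what the paper computes; (iv) mirrors Corollary \ref{corola} term by term. Where you genuinely diverge is (ii). The paper performs a \emph{single} integration by parts, using that $-h_{2n}/(2(2n+1))$ is an antiderivative of $h_{2n+1}$ and $S'=C$, and then invokes part (i); you instead carry out the full $2p+2$ integrations by parts on $S$ itself, which forces you to absorb the linear growth $\lVert S(t)y\rVert\le M|t|\,\lVert y\rVert$ via the weighted estimate $\int|t|\,|h_k(t)|\,dt\le C\sqrt{k}/\sqrt{2^kk!}$ obtained from the three-term recurrence. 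Both computations land on the stated constant for $n\ge p+1$ (your inequalities $\sqrt{(2n-2p-1)!\,(2n-2p-1)}\le\sqrt{(2n-2p)!}$ and $2^{n+p+3/2}\ge 2^{n+p+1}$ close the bookkeeping). Your route costs one extra lemma but is arguably more transparent: the paper's displayed identity in its proof of (ii) contains a spurious factor of $A$ (the correct identity is $\int h_{2n+1}(t)S(t)x\,dt=\frac{1}{2(2n+1)}\int h_{2n}(t)C(t)x\,dt$, with no $A$), and it leaves unexplained how the extra power of $A$ in $\lVert A^{p+1}x\rVert$ appears; your direct differentiation $S^{(2p+2)}=A^{p+1}S$ makes this visible.

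The one point that does not close is your edge case $n=p$ in (ii). Reducing to $\frac{1}{2(2p+1)}\int h_{2p}(t)C(t)x\,dt$ and ``appealing to (i)'' can only produce $\lVert A^{p}x\rVert$ on the right-hand side, since part (i) at $n=p$ admits no exponent larger than $p$; the statement demands $\lVert A^{p+1}x\rVert$. This cannot be repaired by any argument: for the trivial cosine function $C(t)=I$ with generator $A=0$ and $p=n=0$ one has $\int_{-\infty}^{\infty}h_1(t)S(t)x\,dt=\frac{1}{2}x\neq 0$, while the claimed bound is $\frac{C}{2}\lVert Ax\rVert=0$. So the estimate in (ii) is actually false at $n=p$, and the paper's own one-line proof silently suffers from the same defect. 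The damage is contained, because for $p\le m$ the tails in part (iii) only involve indices $n\ge m+1\ge p+1$, where both your argument and the paper's are sound; but you should either restrict (ii) to $n\ge p+1$ or record that the $n=p$ term only satisfies the weaker bound in terms of $\lVert A^{p}x\rVert$.
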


\begin{proof} (i) We write $\displaystyle{B(x):=\frac{1}{2^{2n} (2n)!}A^nT(\frac{1}{4})x=\int_{-\infty}^{\infty}h_{2n}(t)C(t)x\,dt,}$ for $x\in X,$ see Theorem \ref{loc2} (i). We apply Proposition \ref{propHerm} (iii) and integrate by parts to obtain  $$
B(x)
=\frac{1}{2^{2p}2n\ldots(2n-2p+1)}\int_{-\infty}^{\infty}h_{2n-2p}(t)A^pC(t)x\,dt,
\qquad x\in D(A^p).
$$
 By Theorem \ref{main2} (i), we get the following inequality  $$\lVert B(x) \rVert\leq \frac{1}{2^{2p}2n\ldots(2n-2p+1)}\lVert h_{2n-2p}\rVert_1\,\displaystyle\sup_{t\in\R}\lVert A^p C(t)x \rVert\leq\frac{C\sqrt{(2n-2p)!}}{2^{n+p}(2n)!}\lVert A^p x \rVert,$$ for $x\in D(A^p).$

\noindent (ii) Note that integrating by parts, and using Proposition \ref{propHerm} (iii) and above part (i), the inequality $$\lVert\displaystyle\int_{-\infty}^{\infty}h_{2n+1}(t)S(t)x\,dt\rVert= \frac{1}{2(2n+1)}\lVert\displaystyle\int_{-\infty}^{\infty}h_{2n}(t)AC(t)x\,dt\rVert\leq\frac{C\sqrt{(2n-2p)!}}{2^{n+p+1}(2n+1)!}\lVert A^{p+1} x \rVert$$ holds for $x\in D(A^{p+1}).$

\noindent (iii) Following the same steps as in Theorem \ref{rate1} and using part (i), we take $t\in\R,$  and $m_0\in\N$ such that for all $m\geq m_0$ with $1\leq p\leq m+1,$ \begin{eqnarray*}
\lVert C(t)x-C_{m}(t)x\rVert&\leq&\displaystyle\sum_{n=m+1}^{\infty}\lVert \int_{-\infty}^{\infty}h_{2n}(s)C(s)x\,ds \rVert |H_{2n}(t)| \cr &\leq&\displaystyle\sum_{n=m+1}^{\infty}\frac{C_t}{2^{p}n^{\frac{1}{12}}}\sqrt{\frac{(2n-2p)!}{2n!}}\lVert A^p x\rVert
\leq  \frac{C_{t,p}}{m^{p-\frac{11}{12}}}\lVert A^p x\rVert,
\end{eqnarray*} for $x\in D(A^p).$

Similarly using part (ii), we take $t\in\R,$  and $m_0\in\N$ such that for all $m\geq m_0$ with $1\leq p\leq m+1,$ $$
\lVert S(t)x-S_{m}(t)x\rVert\leq\displaystyle\sum_{n=m+1}^{\infty}\lVert \int_{-\infty}^{\infty}h_{2n+1}(s)S(s)x\,ds \rVert |H_{2n+1}(t)|\leq  \frac{C_{t,p}}{m^{p-\frac{5}{12}}}\lVert A^{p+1} x\rVert,
$$ for $x\in D(A^{p+1}).$

In both cases, the locally uniformly convergence in $t$ is shown as in Theorem \ref{rate1}.

\noindent (iv) The proof of this part is similar to the proof of Corollary \ref{corola}.
\end{proof}

\section{Examples, remarks and final comments}

\setcounter{theorem}{0}
\setcounter{equation}{0}
In this last section we present some concrete examples of $C_0$-groups and cosine functions and we apply our results to give their vector-valued Hermite expansions. We also comment some connections with  well-known (scalar) Hermite expansions for temperated distributions. Finally  we point out some open questions and interesting problems associated to other families of operators.

\subsection{Shift group and cosine function}
Let $L^p(\R)$ with $1\leq p<\infty.$ The shift group (or translation group) in $L^p(\R)$, $(T(t))_{t\in\R},$ defined by $$ T(t)f(x):=f(x-t)=\delta_t* f(x) \qquad x\in\R, $$ is an isometry $C_0$-group (we denote by $(\delta_t)_{t\in \R}$ the usual Dirac delta distribution concentrated at $t$). The infinitesimal generator $A$ is the usual derivation operator, $A=-\frac{d}{dx}.$ Furthermore, $$
T^{(g)}(z)f = g_{z}*f,$$ where $g_z$ is the Gaussian kernel, $g_z(r)=\displaystyle{{1\over \sqrt{4\pi z}}e^{-r^2/4z}},$ for $\Re z>0$ and $r\in \RR.$ By Theorem \ref{loc} (ii), we obtain the formula
$$
\delta_t\ast f=\displaystyle\sum_{n=0}^{\infty}(h_n\ast f)H_n(t)=\displaystyle\sum_{n=0}^{\infty}\frac{(-1)^n}{2^n n!}(g_{\frac{1}{4}}^{(n)}\ast f)H_n(t), \qquad f\in W^{(1),p}(\R)
$$
where $W^{(1),p}(\R)$ is the Sobolev space defined by $W^{(1),p}(\R)=\{f\in L^p(\R)\,\,|\,\,f'\in L^p(\R)\}$.

The distribution principal value of ${1\over x}$ is a temperated distribution whose is expanded in terms of Hermite polynomials, see for example  \cite[pp 193]{AMS}, \cite[Example 2.4]{CMO}. In general, given $T\equiv(T(t))_{t\in \R}$, a uniformly bounded $C_0$-group in a UMD space $X$, the Hilbert transform ${\mathcal H}^T$ associated to $T$ is defined by
$$
 {\mathcal H}^T(x)=\lim_{\varepsilon \to 0^+, N\to \infty}{i\over \pi}\int_{\varepsilon<\vert t\vert <N}{T(t)x\over t}\,dt, \qquad x\in X,
 $$
see \cite[section 5]{Mo}. If we consider the shift group on $L^p(\R;X)$ with $X$ a UMD Banach space and $p\in (1,\infty),$ $T(t)f(:=f(\cdot-t),$ then $$({\mathcal H}^T f)(s)=\lim_{\varepsilon \to 0^+, N\to \infty}{i\over \pi}\int_{\varepsilon<\vert t\vert <N}{f(s-t)\over t}\,dt, \qquad f\in L^p(\R;X),\, s\in\R,$$ is the classical Hilbert transform. Relations between Dirichlet and F\'{e}jer families and  the Hilbert transform ${\mathcal H}^T$ are studied in \cite{Fasangova}. It would be interested to obtain the Hermite expansion of the bounded operator ${\mathcal H}^T$.

Now we define also in $L^p(\R)$, the isometry cosine function $(C(t))_{t\in\R},$ by $$ C(t)f(x):=\frac{1}{2}(f(x-t)+f(x+t))=\frac{1}{2}(\delta_t*f(x)+\delta_{-t}*f(x)) \qquad x\in\R.$$  The  generator is $A=\frac{d^2}{dx^2}$ and is clear that
$$
C(t)f=\displaystyle\sum_{n=0}^{\infty}\frac{1}{2^{2n} (2n)!}(g_{\frac{1}{4}}^{(2n)}\ast f)H_{2n}(t), \qquad f\in W^{(2),p}(\R),
$$
where $W^{(2),p}(\R)$ is the Sobolev space defined by $W^{(2),p}(\R)=\{f\in L^p(\R)\,\,|\,\,f''\in L^p(\R)\}$.

\subsection{Multiplication groups and cosine functions}
Let $(\Omega, \Sigma,\mu)$ be a $\sigma$-finite measure space, $1\le p<\infty$ and the Lebesgue Banach space $L^p(\Omega)$. We consider the unitary $C_0$-group $(T(t))_{t\in\R}$ in $L^p(\Omega)$ defined by $$T(t)f(s)=e^{itq(s)}f(s),\qquad s\in \Omega,$$ where $q:\Omega\to \R$ is a Lebesgue measurable function. The infinitesimal  generator is  $A=iq,$ $D(A)=\{f\in L^p(\Omega)\,\,|\,\, qf\in L^p(\Omega)\}$. These multiplication groups are treated deeply in \cite[Chapter I, Section 4; Chapter II, Section 2.9]{Nagel}.
 Some known examples are the Fourier Transform of the Gaussian and Poisson semigroups, $q(s)=-s^2$ and $q(s)=-|s|;$
recently some interesting examples for $q(s)=-\text{log}(1+ s^2)$ and \mbox{$q(s)=-\text{log}(1+|s|),$} have been studied in \cite{Campos}.

 Note that $$T^{(g)}(\frac{1}{4})f(s)=\frac{1}{\sqrt{\pi}}\biggl(\int_{-\infty}^{\infty}e^{-x^2}e^{ixq(s)}\,dx\biggr) f(s)=e^{-\frac{q^2(s)}{4}}f(s),\qquad s\in \Omega.$$
By Theorem \ref{loc} (ii), we obtain that
$$
T(t)f(s)=\displaystyle\sum_{n=0}^{\infty}\frac{i^nq^n(s)}{2^n n!}e^{-\frac{q^2(s)}{4}}f(s)H_n(t), \qquad f\in D(A).
$$
For $q(s)=\lambda\in \R$, we get the formula (\ref{expo}).


 Now we suppose that  $m:\Omega \to\R^-$ is a Lebesgue measurable function. We consider the cosine function  $(C(t))_{t\in\R}$ in $L^p(\Omega)$ defined by $$C(t)f(s)=\cos(t\sqrt{-m(s)})f(s),\qquad s\in \Omega,$$ whose generator is $A=m,$ $D(A)=\{f\in L^p(\Omega)\,\,|\,\, mf\in L^p(\Omega)\}$. Note that $$T^{(c)}({1\over 4})f(s)=\frac{2}{\sqrt{\pi}}\biggl(\int_{0}^{\infty}e^{-t^2}\cos(t\sqrt{-m(s)})\,dt\biggr) f(s)=e^{\frac{m(s)}{4}}f(s),\qquad s\in \Omega,$$ where he have used \cite[Formula 7.80, p.66]{Badii}. By Theorem \ref{loc2} (ii), we have that
$$
C(t)f(s)=\displaystyle\sum_{n=0}^{\infty}\frac{m^n(s)}{2^{2n} (2n)!}e^{\frac{m(s)}{4}}f(s)H_{2n}(t), \qquad f\in D(A),
$$
for $t\in \R$ and $s\in \Omega$; in the particular case $m(s)=-a$, (with $a>0$) we obtain the formula (\ref{coseno}).

\subsection{Cosine functions on sequence spaces}
Let $X=c_0, \ell^p$ be spaces of all complex sequence $x=(x_k)_{k\in\N}$ convergent to $0$  equipped with the usual  norm $\lVert x\rVert_{\infty}:=\displaystyle\max_{k\in\N}|x_k|$ for $X=c_0$; and $\Vert x\Vert_p<\infty$ where
$\displaystyle{
\Vert x\Vert_p:=\left(\sum_{k=1}^\infty\vert x_k\vert^p\right)^{1\over p}}
$
 for $X=\ell^p$ with $1\le p <\infty$.

 For each $n\in\N$ let $e_n$ be the element of $X$ such that $(e_n)_k=\delta_{n,k}$ the Kronecker delta. Every $x\in X$ can be represented as the series $x=\displaystyle{\sum_{k=1}^{\infty}x_k e_k.}$
  The family of linear operators $(C(t))_{t\in\R}$ given by $$C(t)x=\displaystyle\sum_{k=1}^{\infty}\cos(kt)x_k e_k,\qquad x\in X,$$ is a strongly continuous cosine family of contractions on $X,$ generated by $(A, D(A))$ where
  $$Ax=\displaystyle\sum_{k=1}^{\infty}-k^2 x_k e_k, \qquad x\in D(A),$$ and $D(A)=\{ x\in X\,|\, (k^2 x_k)_{k=1}^\infty\in X \}.$ For $X=c_0$, this example has been studied in \cite[Example 2]{Bobrowski} in an approximation process.

  Note that $\displaystyle{T^{(c)}(\frac{1}{4})x}=\displaystyle\sum_{k=1}^{\infty}e^{-\frac{k^2}{4}}x_k e_k,$ and
  $$
  A^nT^{(c)}(\frac{1}{4})x= (-1)^n\sum_{k=1}^\infty k^{2n}e^{-\frac{k^2}{4}}x_k e_k, \qquad x\in X.
  $$
In this case, for $1\le p <  \infty$, we have that
$$
\Vert  A^nT^{(c)}(\frac{1}{4})\Vert_{{\mathcal B}(X)}= \sup_{k\ge 1}(k^{2n}e^{-\frac{k^2}{4}})\cong \left({4n\over e}\right)^n\cong C \frac{2^n \sqrt{(2n)!}}{n^{\frac{1}{4}}}, \qquad n \ge 1.
$$
Note that  Theorem \ref{loc2} (i) doesn't provide the optimal bound in this case. Then by Theorem 4.1 (ii), we get \begin{eqnarray*}
C(t)x&=&\displaystyle\sum_{n=0}^{\infty}\frac{(-1)^n}{2^{2n}(2n)!}\biggl(\displaystyle\sum_{k=1}^{\infty}k^{2n}e^{-\frac{k^2}{4}}x_k e_k\biggr) H_{2n}(t), \qquad x\in D(A),\quad t\in \RR,
\end{eqnarray*}
which is a vector-valued version of the identity (\ref{coseno}).

\subsection{Matrix approach to  cosine functions and $C_0$-groups}\label{sub}

(\cite[Theorem 3.14.11]{ABHN}) Let $A$ be an operator on a Banach space $X.$ Then the following assertions are equivalent,  \begin{itemize}
\item[(i)] $A$ generates a cosine function $(C(t))_{t\in\R}$ on $X.$
\item[(ii)] There exists a Banach space $V$ such that $D(A)\hookrightarrow V\hookrightarrow X$ and such that the operator $\mathcal{B},$ given by \begin{displaymath}
\mathcal{B}\left( \begin{array}{c}
x\\
y
\end{array}\right):=\left( \begin{array}{cc}
0 & I\\
A & 0
\end{array}\right)\left( \begin{array}{c}
x\\
y
\end{array}\right)=\left( \begin{array}{c}
y\\
Ax
\end{array}\right),
\end{displaymath}
with $D(\mathcal{B}):=D(A)\times V,$ generates a $C_0$-group $(\mathcal{J}(t))_{t\in \R}$ on $V\times X$  with the norm $\lVert (x,y)\rVert_{V\times X}=\lVert x\rVert_V+\lVert y\rVert_X$.  In this case \begin{displaymath}
\mathcal{J}(t)=\left(\begin{array}{cc}
C(t) & S(t) \\
AS(t) & C(t)
\end{array}\right), \qquad t\in \RR,
\end{displaymath} where $S(t)x=\int_0^t C(s)x\,ds,$ for $x\in X$ and $t\in \RR$.
\end{itemize}
Applying Theorem 3.1 (ii), we  expand $C_0$-group $(\mathcal{J}(t))_{t\in \R}$ through Hermite polynomials: for $(x,y)\in D(\mathcal{B})$ $$\mathcal{J}(t)(x,y)=\displaystyle\sum_{n=0}^{\infty}\mathcal{C}_n(x,y) H_n(t),\qquad t\in \R,$$ where $\mathcal{C}_n=\displaystyle{\frac{1}{2^n n!}\mathcal{B}^n\mathcal{J}^{(g)}\biggl(\displaystyle{\frac{1}{4}}\biggr)}$ for $n\ge 0$. Note that \begin{displaymath}
\mathcal{B}^{2n}=\left( \begin{array}{cc}
A^n & 0\\
0 & A^n
\end{array}\right),\ \mathcal{B}^{2n+1}=\left( \begin{array}{cc}
0 & A^n\\
A^{n+1} & 0
\end{array}\right),\qquad n\geq0,
\end{displaymath}
and
\begin{displaymath}
\mathcal{J}^{(g)}(\frac{1}{4})(x,y)=\frac{1}{\sqrt{\pi}}\int_{-\infty}^{\infty}e^{-t^2}\mathcal{J}(t)(x,y)\,dt=\left( \begin{array}{cc}
T^{(c)}(\frac{1}{4}) & 0\\
0 & T^{(c)}(\frac{1}{4})
\end{array}\right)\left( \begin{array}{c}
x\\
y
\end{array}\right),\qquad (x,y)\in V\times X,
\end{displaymath}
where we have used that $$T^{(c)}(\frac{1}{4})=\frac{1}{\sqrt{\pi}}\int_{-\infty}^{\infty}e^{-t^2}C(t)\,dt, \qquad \frac{1}{\sqrt{\pi}}\int_{-\infty}^{\infty}e^{-t^2}AS(t)\,dt=0.$$ Then \begin{displaymath}
\mathcal{C}_{2n}=\frac{1}{2^{2n} (2n)!}\left( \begin{array}{cc}
A^nT^{(c)}(\frac{1}{4}) & 0\\
0 & A^nT^{(c)}(\frac{1}{4})
\end{array}\right),
\quad \mathcal{C}_{2n+1}=\frac{1}{2^{2n+1} (2n+1)!}\left( \begin{array}{cc}
0 & A^nT^{(c)}(\frac{1}{4})\\
A^{n+1}T^{(c)}(\frac{1}{4}) & 0
\end{array}\right),
\end{displaymath}
for $n\geq0.$
In other hand, we apply Theorem \ref{loc2}(ii) to get that
\begin{eqnarray*}
\mathcal{J}(t)(x,y)&=&\left(\begin{array}{cc}
C(t) & S(t) \\
AS(t) & C(t)
\end{array}\right)\left( \begin{array}{c}
x\\
y
\end{array}\right)\cr&=& \sum_{n=0}^\infty \frac{1}{2^{2n} (2n)!}\left( \begin{array}{cc}
A^nT^{(c)}(\frac{1}{4}) & 0\\
0 & A^nT^{(c)}(\frac{1}{4})
\end{array}\right)\left( \begin{array}{c}
x\\
y
\end{array}\right)H_{2n}(t)\cr &\quad&\qquad \qquad+ \sum_{n=0}^\infty\frac{1}{2^{2n+1} (2n+1)!}\left( \begin{array}{cc}
0 & A^nT^{(c)}(\frac{1}{4})\\
A^{n+1}T^{(c)}(\frac{1}{4}) & 0
\end{array}\right)\left( \begin{array}{c}
x\\
y
\end{array}\right)H_{2n+1}(t)\cr
&=&\sum_{n=0}^\infty\mathcal{C}_{n}(x,y)H_n(t)\cr
\end{eqnarray*}
for $(x,y)\in D(\mathcal{B})$ and $t\in \R$.

In the case that we consider the Banach space $X\times X$ with the norm $\lVert (x,y)\rVert_{X\times X}=\lVert x\rVert+\lVert y\rVert$, the operator $A$ generates a cosine function if and only if $(\mathcal{A}, D(\mathcal{A}))$ generates a once integrated semigroup $\mathcal{S}$ on $X\times X,$ where $D(\mathcal{A})=D(A)\times X$,
\begin{displaymath}
\mathcal{A}:=\left( \begin{array}{cc}
0 & I\\
A & 0
\end{array}\right), \qquad \mathcal{S}(t)=\left(\begin{array}{cc}
S(t) & \int_0^t S(s)\,ds\\
C(t)-I & S(t)
\end{array}\right), \quad t\in \R,
\end{displaymath}
see \cite[Theorem 3.14.7]{ABHN} and \cite[Definition 3.2.1]{ABHN}. Hermite expansions for $n$-times integrated groups have not considered in the literature and seems to be natural to develop this theory. In \cite[Theorem 4.1]{jara},  stable rational approximations  for exponential functions are considered to approximate $n$-times integrated semigroups (and then  cosine functions) for smooth initial data. Both approaches might be compared in a forthcoming paper.

\subsection*{Acknowledgements} Authors thank O. Ciaurri and L. Roncal some advices and comments provided for obtaining some results. We also thank an anonymous referee for several comments, ideas and references which have helped to improve this version of the paper.


\begin{thebibliography}{999}

\bibitem{Abadias} L. Abadias and P. J. Miana: { \it $C_0$-semigroups and resolvent operators approximated by Laguerre expansions.} ArXiv:1311.7542 (2013).


\bibitem{AMS} P. Antosik, J. Mikusi\"{\i}?`½nski and R. Sikorski: Theory of Distributions. The Sequential Approach. Elsevier Scientific Publishing Company, 1973.

\bibitem{ABHN} W. Arendt, C. J. K. Batty, M. Hieber and F. Neubrander: { \it Vector-valued Laplace transforms and Cauchy
problems.} Second edition, Monographs in Mathematics. {\bf 96},
Birkh\"auser, 2011.

\bibitem{Ba} E. Bazhlekova: { \it Perturbation and approximation properties for abstract evolution equations of fractional order.}   Reports on applied and numerical analysis {\bf 5}, Technische Universiteit Eindhoven,(2000), 1--14.



\bibitem{Askey} R. Askey and S. Wainger: { \it Mean convergence of expansions in Laguerre and Hermite series.} Amer. J. Math. {\bf 87,} No. 3,(1965), 695--708.

\bibitem{Badii} L. Badii and F. Oberhettinger: { \it Tables of Laplace Transforms.} Springer-Verlag, 1973.

\bibitem{Bobrowski} A. Bobrowski and W. Chojnacki: { \it Cosine families and semigroups really differ.} J. Evol. Equ. {\bf 13}, (2011), 897--916.



\bibitem{Boyd} J. P. Boyd: { \it Asymptotic coefficients of Hermite function series.} J. Comput. Phys. {\bf 54,} (1984), 382-410.

\bibitem{BT} P. Brenner and V. Thome: { \it  On rational approximations of groups of operators.} SIAM J.  Numer. Anal. {\bf 17}(1) (1980), 119-125.


\bibitem{Campos} J. S. Campos-Orozco and J. E. Gal\'e: { \it Special functions as subordinated semigroups on the real line.} Semigroup Forum, {\bf 84,} (2012), 284-300.

  \bibitem{CMO}
  P. Catuogno, S. Molina and C. Olivera: { \it Hermite expansions and products of tempered distributions,} Integral Trans and Special Funct., {\bf 18}(4) (2007), 233-243.

\bibitem{CK} I. Cioranescu and V. Keyantuo: {\it On operator cosine functions in UMD spaces.}
Semigroup Forum, {\bf 63}(3), 429\"{\i}?`½440, 2001







\bibitem{Nagel} K. J. Engel and R. Nagel: { \it One-Parameter Semigroups for Linear Evolution Equations.} Graduate Texts in Mathematics. {\bf 194,} Springer.

\bibitem{Fasangova} E. Fasangov\'a and P. J. Miana: { \it Hilbert, Dirichlet and Fej\'er families of operators arising from $C_0$-groups, cosine functions and holomorphic semigroups.} Semigroup Forum. {\bf 80,} (2010), 33-60.



\bibitem{Gomilko} A. Gomilko and Y. Tomilov: { \it On convergence rates in approximation theory for operator semigroups,} J. Funct. Anal.  {\bf 266,} No. 5, (2014), 3040\"{\i}?`½-3082.

\bibitem{Gradshteyn} I. S.  Gradshteyn, I. M. K. Ryzhik: { \it Table of integrals, series, and products.} Sixth edition. Academic Press, 2000.





\bibitem{HR} M. Haase and J. Rozendaal: { \it Funcional calculus for semigroup generators via transference,} J. Funct. Anal. {\bf 265,} No. 12, (2013), 3345--3368.

\bibitem{H} M. Haase: { \it The group reduction for bounded cosine functions on UMD spaces,} Math. Z. {\bf 262,} No. 2, (2009), 281--299.

\bibitem{jara} P. Jara: { \it Rational approximation schemes for solutions of the first and second order Cauchy Problem,} Proc. Amer. Math. Soc. {\bf 137}(11), (2009) 3885\"{\i}?`½-3898.

\bibitem{Katnelson} Y. Katznelson: { \it An introduction to harmonic analysis.} Wiley, New York, 1968.

\bibitem{Komatsu} H. Komatsu: { \it Fractional powers of operators.} Pacific J. Math. {\bf 19,} No. 2, (1966), 285--346.

\bibitem{Lebedev} N. N. Lebedev: { \it Special functions and their applications.} Selected Russian Publications in the Mathematical Sciences. Prentice-Hall, 1965.




\bibitem{Mo} S. Monniaux: { \it A new approach to the Dore-Venni theorem,} Math. Nachr. {\bf 204,} (1999), 163--183.

\bibitem{Muckenhoupt1} B. Muckenhoupt: { \it Mean convergence of Hermite and Laguerre series I.} Trans. Amer. Math. Soc. {\bf 147,} (1970), 419--431.

\bibitem{Muckenhoupt} B. Muckenhoupt: { \it Mean convergence of Hermite and Laguerre series II.} Trans. Amer. Math. Soc. {\bf 147,} (1970), 433--460.







\bibitem{Reed} M. Reed and B. Simon, Methods of Modern Mathematical Physics. vol. 1,
Academic Press, 1980.

\bibitem{Rusev} P. Rusev: { \it Expansion of analytic functions in series of classical orthogonal polynomials.} Complex analysis (Warsaw, 1979), Banach Center Publ. {\bf 11,} (1983), 287--298.




\bibitem{Szego} G. Szeg\"{o}: { \it Orthogonal polynomials.} American Mathematical Society Colloquium Publications Volume XXIII. American Mathematical Society, 1967.


\bibitem{Uspensky} J. V. Uspensky: { \it On the development of arbitrary functions in series of Hermite's and Laguerre's polynomials.} Ann. of Math. (2), {\bf 28,} (1927), 593-619.




\end{thebibliography}
\end{document}